\def \inf{{\rm inf}}
\def \dim{{\rm dim \,}}
\def \dm{{\rm diam \,}}
\def \cd{{\rm Card \,}}
\def \cl{{\rm cl \,}}
\newcommand{\lto}{\longrightarrow}
\newcommand \Z{\mathbb{Z}}
\newcommand \N{\mathbb{N}}
\newcommand \R{\mathbb{R}}
\newcommand \F{\mathcal{F}}
\newcommand \St{{\rm St}}
\newcommand \ef{\mathbf{\Gamma}}
\newcommand \gf{\mathbf{\Delta}}
\newcommand \dih{\dim_{H}}
\newcommand \uno{\dim_{\ef}^{1}}
\newcommand \dos{\dim_{\ef}^{2}}
\newcommand \tres{\dim_{\ef}^{3}}
\newcommand \tresb{\dim_{\gf}^{3}}
\newcommand \cuatro{\dim_{\ef}^{4}}
\newcommand \cinco{\dim_{\ef}^{5}}
\newcommand \seis{\dim_{\ef}^{6}}
\newcommand \h{\dim_{H}}
\newcommand \bc{\dim_{B}}
\newcommand \dc{\dim(\alpha)}
\newcommand \cldim{\cl-\dim}
\newcommand \anf{\mathcal{A}_{n}(F)}
\newcommand \mpty{\emptyset}
\newcommand*\rot{\rotatebox{90}}
\newcommand*\OK{\ding{51}}
\newtheorem{prop}{Proposition}[section]
\newtheorem{teo}[prop]{Theorem}
\newtheorem{ct}[prop]{Counterexample}
\newtheorem{cor}[prop]{Corollary}
\newtheorem{lema}[prop]{Lemma}
\newtheorem{defn}[prop]{Definition}
\newtheorem{obs}[prop]{Remark}
\newtheorem{prob}[prop]{Open question}
\theoremstyle{definition}
\theoremstyle{remark}
\newtheoremstyle{named}{}{}{\itshape}{}{\bfseries}{.}{.5em}{\thmnote{#3's }#1}
\theoremstyle{named}
\newtheorem*{namedtheorem}{Theorem}
\journal{Topology and its Applications}
\begin{document}

\begin{frontmatter}






\title{Counterexamples in theory of fractal dimension for fractal structures} 

\author{M. Fern\'andez-Mart\'{\i}nez\fnref{fn1}\corref{cor1}}
\ead{fmm124@gmail.com}
\address{University Centre of Defence at the Spanish Air Force Academy, MDE-UPCT,\\ 30720 Santiago de la Ribera, Murcia, SPAIN}
\fntext[fn1]{The first author specially acknowledges the valuable support
provided by Centro Universitario de la Defensa en la Academia General del
Aire de San Javier (Murcia, Spain).}

\author{Magdalena Nowak\fnref{fn2}}
\ead{magdalena.nowak805@gmail.com}
\address{Jan Kochanowski University in Kielce, \'Swietokrzyska 15, 25-406 Kielce, POLAND}
\fntext[fn2]{The second author was partially supported by National Science Centre grant DEC-2012/07/N/ST1/03551.}

\author{M.A S\'anchez-Granero\fnref{fn3}}
\ead{misanche@ual.es}
\address{Department of Mathematics, Universidad de Almer\'{\i}a, 04120 Almer\'{\i}a, SPAIN}
\fntext[fn3]{The third author acknowledges the support of the Ministry of Economy and Competitiveness
of Spain, Grant MTM2012-37894-C02-01.}

\cortext[cor1]{Corresponding author}



\begin{abstract}
Fractal dimension constitutes the main tool to test for fractal patterns in Euclidean contexts. For this purpose, it is always used the box dimension, since it is easy to calculate, though the Hausdorff dimension, which is the oldest and also the most accurate fractal dimension, presents the best analytical properties. Additionally, fractal structures provide an appropriate topological context where new models of fractal dimension for a fractal structure could be developed in order to generalize the classical models of fractal dimension. In this paper, we provide some counterexamples regarding these new models of fractal dimension in order to show the reader how they behave mathematically with respect to the classical models, and also to point out which features of such models can be exploited to powerful effect in applications.
\end{abstract}

\begin{keyword}
Fractal \sep fractal structure \sep fractal dimension \sep box-counting dimension \sep Hausdorff dimension \sep iterated function system \sep IFS-attractor \sep counterexample

\MSC[2010] Primary 37F35 \sep Secondary 28A78, 28A80, 54E99

\end{keyword}

\end{frontmatter}

\section{Introduction}

The analysis of fractal patterns have growth increasingly during the last years, mainly due to the wide range of applications to diverse scientific areas where fractals have been explored, including physics, statistics, and economics (see, e.g., \cite{FAL90,FED88}). It is also worth mentioning that there has also been a special interest for applying fractals to social sciences (see for example \cite{BRO10B} and references therein). 

It turns out that the key tool to study the complexity of a given system is the fractal dimension, since this is its main invariant which throws quite useful information about the complexity that it presents when being examined with enough level of detail. 

We would like also to point out that fractal dimension is usually understood as the classical box dimension, mainly in the field of empirical applications. In fact, its popularity is due to the possibility of its effective calculation and empirical estimation. On the other hand, the Hausdorff dimension also constitutes a powerful analytical model which allows to ``measure'' the complexity of a system, at least from a theoretical point of view. Nevertheless, though they are defined for any metric (resp. metrizable) space, almost all the empirical applications of fractal dimension are tackled in the context of Euclidean spaces. 
In addition to that, recall that box dimension is more useful for practical applications, whereas Hausdorff dimension presents ``better'' analytical properties, due to the fact that its standard definition is based on a measure. Indeed, though Hausdorff dimension becomes the most accurate model for fractal dimension, since its definition is quite general, it can result difficult or even impossible to calculate in practical applications. 

It is worth mentioning that the application of fractal structures allows to provide new models for a fractal dimension definition on any generalized-fractal space, and not only on the Euclidean ones. This extends the classical theory of fractal dimension to the more general context of fractal structures. In this way, some theoretical results have been shown to generalize the classical models in the context of fractal structures (see, e.g., \cite[Theorem 4.15]{DIM3} and \cite[Theorem 3.12]{DIM4}). Moreover, we would like to point out that some fractal dimensions for a fractal structure have been already succesfully applied in non-Euclidean contexts, where the box dimension cannot be applied (see, e.g., \cite{MFM12,MFM13}).

Accordingly, when providing a new model to calculate the fractal dimension, it would be desirable that the contributed definitions allow to calculate the fractal dimension for a given subset as easy as the box dimension models, though one should be also mirrored in the analytical properties satisfied by the Hausdorff dimension.

In this paper, though, we provide some counterexamples regarding some new models of fractal dimension (described in forthcoming Subsection \ref{sub:models}) in order to show the reader how they behave mathematically with respect to the classical models, and also to point out which features of such models can be exploited to powerful effect in further applications.

The organization of this paper is as follows. In Section \ref{sec:pre}, we provide some basic definitions, notations, and results that are useful along this paper.
In Section \ref{sec:def}, we provide a collection of counterexamples regarding some theoretical properties that are often explored for new definitions of fractal dimension. 
Moreover, in Section \ref{sec:dim1}, we describe some features about a fractal dimension model for a fractal structure which does not depend on any metric.
In addition to that, we devote Section \ref{sec:defs} to justify some natural assumptions that must be satisfied in some Hausdorff dimension type definitions for a fractal structure.
It is also worth mentioning that some counterexamples regarding the fractal dimension of IFS-attractors are contained in Section \ref{sec:ifs}.
Finally, in Section \ref{sec:agree}, we establish that some of the fractal dimension definitions for a fractal structure that we analyze along this paper, do not coincide, in general.

\section{Preliminaries}\label{sec:pre}

In this section, we provide all the necessary mathematical background for this paper, including definitions, notions, and theoretical results.\newline 
Along this paper, let $I=\{1,\ldots,k\}$ be a finite set of indices.
%
%

\subsection{Classical models for fractal dimension}\label{sub:classics}

Next, we recall the definition of the standard box dimension.
As \cite[Subsection 3.6]{FAL90} points out, its origins become quite hard to trace, though it seems that it would have been considered previously by the Hausdorff dimension pioneers, who rejected it at a first glance due to its lack of theoretical properties. Anyway, the standard definition of box dimension that we recall next, was firstly provided in \cite{PON32}.

\begin{defn}\label{def:bc}
The (lower/upper) box dimension for any subset $F\subseteq \mathbb{R}^d$ is given by the following (lower/upper) limit:
\begin{equation*}\label{eq:bc}
\bc(F)=\lim_{\delta\to 0}\frac{\log N_{\delta}(F)}{-\log \delta},
\end{equation*}
where $N_{\delta}(F)$ is the number of $\delta$-cubes that intersect $F$.\label{eq:bc3}
\end{defn}
Recall that a $\delta$-\emph{cube} in $\mathbb{R}^d$ is a set of the form $[k_{1}\, \delta,(k_{1}+1)\, \delta]\times\, \ldots\, \times [k_{d}\, \delta,(k_{d}+1)\, \delta]$, with $k_1,\,\ldots\,,k_d\in \mathbb{Z}$. It is also worth mentioning that the formula contained in Definition \ref{def:bc} could be properly discretized through $\delta=1/2^n:n\in \N$, which becomes specially appropriate for computational purposes \cite{MFM12}.
Some alternatives to calculate the box dimension could be found out in \cite[Equivalent definitions 3.1]{FAL90}, where equivalent expressions to calculate $N_{\delta}(F)$ are provided. Moreover, notice that in both \cite[Theorem 3.5]{DIM1} and \cite[Equivalent definitions 2.1]{FAL14}, the equivalence among all of these alternative approaches to calculate the box dimension are shown. 

On the other hand, in 1919, Hausdorff applied a methodology developed by Carath\'eo\-dory some years earlier (see \cite{CAR14}) in order to define the measures that now bear his name, and showed that the middle third Cantor set has positive and finite measure whose dimension is equal to $\log 2/ \log 3$ \cite{HAU19}. In addition to that, a detailed study regarding the analytical properties of both the Hausdorff measure and dimension was mainly developed by Besicovitch and his pupils afterwards (see, e.g., \cite{BES34,BES37}).


Along this paper, we will define the diameter of a given subset $A$ of any metric space $(X,\rho)$, as usual, by $\dm(A)=\sup\{\rho(x,y):x,y\in A)\}$. 
Next, let us recall the standard construction regarding the Hausdorff dimension. 
Let $\delta>0$.
Thus, for any subset $F$ of $X$, we recall that a $\delta$-\emph{cover} of $F$ is just a countable family of subsets $\{U_j\}_{j\in J}$, such that $F\subseteq \bigcup_{j\in J}U_j$, where $\dm(U_j)\leq \delta$, for all $j\in J$. Moreover, let $\mathcal{C_\delta}(F)$ be the collection of all $\delta$-covers of $F$, and 
let us consider the following quantity:
\begin{equation*}\label{eq:hdelta}
\mathcal{H}_\delta^s(F)=\inf\Bigg\{\sum_{j\in J}\dm(U_j)^s:\{U_j\}_{j\in J}\in \mathcal{C}_\delta(F)\Bigg\}.
\end{equation*}
Interestingly, the next limit always exists:
\begin{equation*}\label{eq:hm}
\mathcal{H}_H^s(F)=\lim_{\delta\to 0}\mathcal{H}_\delta^s(F),
\end{equation*}
which is named as the $s$-\emph{dimensional Hausdorff measure} of $F$. 
Thus, the \emph{Hausdorff dimension} of $F$ is fully determined as the unique point $s$, where the $s$-dimensional Hausdorff measure ``jumps'' from $\infty$ to $0$, namely,
\begin{equation}\label{eq:hreach}
\dih(F)=\inf\{s:\mathcal{H}_H^s(F)=0\}=\sup\{s:\mathcal{H}_H^s(F)=\infty\}.
\end{equation}

\subsection{Fractal structures and the natural fractal structure on any Euclidean subspace}\label{sub:fs}

The concept of fractal structure, which naturally appears in several topics regarding Asymmetric Topology \cite{SG10}, was first introduced in \cite{SG99A} to characterize non-Archimedeanly quasi-metrizable spaces. Afterwards, in \cite{MSG12}, it was applied to deal with IFS-attractors.
It is worth mentioning that fractal structures do constitute a powerful tool to develop new fractal dimension models that allow to calculate the fractal dimension over a wide range of (non-Euclidean) spaces and contexts (see, e.g., \cite{MFM12}).

Recall that a family $\Gamma$ of subsets of a given space $X$ is said to be a \emph{covering}, if $X=\bigcup\{A:A\in \Gamma\}$. 


Next, we provide the definition of a fractal structure on a set $X$. In fact, let $\Gamma_1$ and $\Gamma_2$ be any two coverings of $X$. Recall that $\Gamma_1\prec \Gamma_2$ means that $\Gamma_1$ is a \emph{refinement} of $\Gamma_2$, namely, for all $A\in \Gamma_1$, there exists $B\in \Gamma_2$, such that $A\subseteq B$. In addition to that, $\Gamma_1\prec \prec \Gamma_2$ denotes that $\Gamma_1\prec \Gamma_2$, and also that, for all $B\in \Gamma_2$, $B=\bigcup\{A\in \Gamma_1: A\subseteq B\}$. Hence, a fractal structure on a set $X$ is a countable family of coverings of $X$, $\ef=\{\Gamma_n\}_{n\in \mathbb{N}}$, such that $\Gamma_{n+1}\prec \prec \Gamma_n$, for all $n\in \mathbb{N}$. In this way, covering $\Gamma_n$ is called \emph{level} $n$ of the fractal structure $\ef$. 

A fractal structure induces a transitive base of quasi-uniformity (and hence a topology) given by the transitive family of entourages $U_{\Gamma_n}=X\setminus\bigcup\{A\in \Gamma_n: x\notin A\}$, where $n \in \mathbb{N}$. 
To simplify the theory, the levels in a fractal structure will not be coverings in the usual sense. Instead of this, we allow that a set can appear twice or more in any level of a fractal structure. Also, we would like to point out that a fractal structure $\ef$ is said to be \emph{finite} provided that all its levels $\Gamma_n$ are finite coverings. Further, a fractal structure $\ef$ is said to be \emph{locally finite}, if for each level $n$ in that fractal structure, it holds that any point $x\in X$ belongs to a finite number of elements $A\in \Gamma_n$. 
In general, if $\Gamma_n$ satisfies a certain property $P$, for all $n\in \mathbb{N}$, and $\ef=\{\Gamma_n\}_{n\in \mathbb{N}}$ is a fractal structure on $X$, then we will say that $\ef$ is a fractal structure within the property $P$, and also that $(X,\ef)$ is a GF-space with that property.
In addition, if $\ef$ is a fractal structure on $X$, and $\St(x,\ef)=\{\St(x,\Gamma_n):n\in \mathbb{N}\}$ is a neighborhood base of $x$ for each $x\in X$, where $\St(x,\Gamma_n)=\bigcup\{A\in \Gamma_n: x\in A\}$, then $\ef$ is said to be a \emph{starbase fractal structure}. It is worth mentioning that starbase fractal structures are connected with metrizability (see \cite{SG02A,SG02B}).

On the other hand, it turns out that any Euclidean space $\R^d$ can always be equipped with a natural fractal structure, which satisfies some interesting topological properties. In fact, such a natural fractal structure, which was first described in \cite[Definition 3.1]{DIM1}, is locally finite, starbase and induces the usual topology.
\begin{defn}\label{def:1}
The natural fractal structure on a Euclidean space $\mathbb{R}^{d}$ is given by the countable family of coverings $\ef=\{\Gamma_{n}\}_{n\in \mathbb{N}}$, whose levels are 
\begin{equation*}
\Gamma_{n}=\Bigg\{\Bigg[\frac{k_{1}}{2^{n}},\frac{k_{1}+1}{2^{n}}\Bigg]\times \, \ldots\, \times \Bigg[\frac{k_{d}}{2^{n}},\frac{k_{d}+1}{2^{n}}\Bigg]:k_{1},\ldots,k_{d}\in \mathbb{Z}\Bigg\}.
\end{equation*}
\end{defn}

In particular, a natural fractal structure induced on real subsets could also be considered from Definition \ref{def:1}.
For instance, the natural fractal structure (on the real line) induced on the closed unit interval $[0,1]$, could be defined as the countable family of coverings $\ef=\{\Gamma_n\}_{n\in \mathbb{N}}$, whose levels are given by $\Gamma_n=\{[\frac{k}{2^n},\frac{k+1}{2^n}]:k\in \{0,1,\ldots, 2^{n}-1\}\}$.
%
\subsection{Theoretical models for fractal dimension based on fractal structures}\label{sub:models}

Next, we recall our two first models of fractal dimension for a fractal structure, namely, both fractal dimensions I \& II. Thus, as it happens with classical box dimension, these models of fractal dimension do not have always to exist. This is the reason for which we have to define them through lower/upper limits, too.

\begin{defn}[Box dimension type models for a fractal structure]\label{def:typebox}
Let $\ef$ be a fractal structure on a distance space $(X,\rho)$, $F$ be a subset of $X$, and $N_{n}(F)$ be the number of elements in level $n$ that intersect $F$. Thus,
\begin{enumerate}[(1)]
\item the (lower/upper) fractal dimension I for $F$ is given by the (lower/upper) limit: \label{def:dim1}
\begin{equation*}
\uno(F)=\lim_{n\to \infty}\frac{\log N_{n}(F)}{n\log 2}.
\end{equation*}
\item The (lower/upper) fractal dimension II for $F$ is defined as the (lower/upper) limit: \label{def:dim2}
\begin{equation*}
\dos(F)=\lim_{n\to \infty}\frac{\log N_{n}(F)}{-\log \delta(F,\Gamma_{n})},
\end{equation*}
where $\delta(F,\Gamma_{n})=\sup\{\dm(A):A\in \anf\}$ is the diameter of $F$ in each level of the fractal structure, and $\anf=\{A\in \Gamma_n:A\cap F\neq \mpty\}$.
\end{enumerate}
\end{defn}

It turns out that both fractal dimensions I \& II do generalize the box dimension in the context of Euclidean spaces equipped with their natural fractal structures (see \cite[Theorem 4.7]{DIM1}).

It is worth mentioning that the fractal dimension definition for a fractal structure we provide next, could be understood as a hybrid model, since its definition is made, someway, as a discrete version regarding the Hausdorff dimension, though it also generalizes the classical box dimension, too.
\begin{defn}\label{def:dim3}
Let $\ef$ be a fractal structure on a metric space $(X,\rho)$, $F$ be a subset of $X$, and assume that $\delta(F,\Gamma_n)\to 0$.
Given $n\in \N$, let us also consider the next expression: \label{eq:hnk}
$$\mathcal{H}_{n,3}^{s}(F)=\inf\Bigg\{\sum_{j\in J}\dm(A_j)^s:\{A_j\}_{j\in J}\in \mathcal{A}_{n,3}(F)\Bigg\},$$
where $\mathcal{A}_{n,3}(F)=\{\{A \in \Gamma_l: A \cap F \neq\mpty\}: l\geq n \}$. 
Define also
$$\mathcal{H}_3^s(F)=\lim_{n\to \infty}\mathcal{H}_{n,3}^{s}(F).$$
Thus, the fractal dimension III for $F$ is defined as the following non-negative real number:
$$\dim_{\ef}^{3}(F)=\inf\{s:\mathcal{H}_3^s(F)=0\}=\sup\{s:\mathcal{H}_3^s(F)=\infty\}.$$
\end{defn}
Unlike it happens with fractal dimensions I \& II (recall Definition \ref{def:typebox}), fractal dimension III always exist (see \cite[Remark 4.4]{DIM3}). This is mainly due to the fact that $\mathcal{H}_{n,3}^s$ is a monotonic sequence in each natural number $n$. Moreover, \cite[Theorem 4.15]{DIM3} shows that fractal dimension III generalizes box dimension (as well as fractal dimensions I \& II) in the context of Euclidean spaces equipped with their natural fractal structures.
Interestingly, it also turns out that fractal dimension III can be estimated as easy as the standard box dimension in empirical applications. For additional details, we refer the reader to \cite{DIM3}, and specially to Theorem 4.7, therein, since it provides a handier expression to calculate the fractal dimension III through the families $\anf$. In fact, to deal with, the following quantity allows us to calculate the fractal dimension III for a given subset $F$ of $X$:
$$\mathcal{H}_n^s(F)=\sum\{\dm(A)^s:A\in \anf\}.$$
The last step in this subsection is to describe three more models for a fractal dimension definition (with respect to any fractal structure) following the spirit of the Hausdorff dimension. Thus, while the first one is specially interesting, since its description is made in terms of finite coverings (which allowed the authors in \cite{HAU15} to contribute the first-known overall algorithm to calculate the Hausdorff dimension), the remaining definitions become close approaches to the classical Hausdorff dimension. In fact, while upcoming fractal dimension V is a discrete version regarding the Hausdorff model, the fractal dimension VI provides a continuous approach for that in terms of $\delta$-covers.
\begin{defn}[Hausdorff dimension type models for a fractal structure]\label{def:456}
Let $\ef$ be a fractal structure on a metric space $(X,\rho)$, $F$ be a subset of $X$, and assume that $\delta(F,\Gamma_n)\to 0$. Thus, let us consider the following expressions:
\begin{enumerate}[(1)]
\item given $n\in \N$: \label{eq:hnk}
$$\mathcal{H}_{n,k}^{s}(F)=\inf\Bigg\{\sum_{j\in J}\dm(A_j)^s:\{A_j\}_{j\in J}\in \mathcal{A}_{n,k}(F)\Bigg\},$$
where $\mathcal{A}_{n,k}(F)$ is given, in each case, by:
\begin{enumerate}[(i)]
\item $\{\{A_{j}\}_{j\in J}:A_{j}\in \bigcup_{l\geq n}\Gamma_{l},\forall\, j\in J, F\subseteq \bigcup_{j\in J}A_{j}, \cd(J)<\infty\}$,\hfill if $k=4$;
\item $\{\{A_{j}\}_{j\in J}:A_{j}\in \bigcup_{l\geq n}\Gamma_{l}, \forall\, j\in J, F\subseteq \bigcup_{j\in J}A_{j} \}$,\hfill if $k=5$. 
\end{enumerate}        
Define also:
$$\mathcal{H}_k^s(F)=\lim_{n\to \infty}\mathcal{H}_{n,k}^{s}(F),$$
for $k=4,5$. Thus, the fractal dimension IV (resp. V) for $F$ is defined as the non-negative real value satisfying the following identity:
$$\dim_{\ef}^{k}(F)=\inf\{s:\mathcal{H}_k^s(F)=0\}=\sup\{s:\mathcal{H}_k^s(F)=\infty\}.$$
\item Given $\delta>0$:
$$\mathcal{H}_{\delta,6}^{s}(F)=\inf\Bigg\{\sum_{j\in J}\dm(A_j)^s:\{A_j\}_{j\in J}\in \mathcal{A}_{\delta,6}(F)\Bigg\},$$
where $\mathcal{A}_{\delta,6}(F)$ is defined as
$$\mathcal{A}_{\delta,6}(F)=\Bigg\{\{A_j\}_{j\in J}:A_j\in \bigcup_{l\in \mathbb{N}}\Gamma_l, \forall \, j\in J, \dm(A_j)\leq \delta, F\subseteq \bigcup_{j\in J}A_j\Bigg\}.$$     
Let us consider also
$$\mathcal{H}_6^s(F)=\lim_{\delta\to 0}\mathcal{H}_{\delta,6}^{s}(F).$$
Hence, the fractal dimension VI for $F$ is given by
$$\seis(F)=\inf\{s:\mathcal{H}_6^s(F)=0\}=\sup\{s:\mathcal{H}_6^s(F)=\infty\}.$$
\end{enumerate}
\end{defn}
In Definition \ref{def:456}, we consider that $\inf\ \emptyset=\infty$. Thus, whether $\mathcal{A}_{n,4}(F)=\emptyset$ (resp. $\mathcal{A}_{n,5}(F)=\emptyset$ or $\mathcal{A}_{\delta,6}(F)=\emptyset$) for some $F\subset X$, then $\cuatro(F)=\infty$ (resp. $\cinco(F)=\infty$ or $\dim_{\ef}^6(F)=\infty$).
It is worth mentioning that Table \ref{table:1} in Section \ref{sec:def} provides a schematic comparison regarding the analytical properties that are satisfied by all the fractal dimensions studied along this paper.
Surprisingly, it turns out that fractal dimension IV would not seem to be interesting enough in the light of the properties it satisfies, since they are the same as box dimension does. However, though its definition is made in terms of finite coverings, it holds that both fractal dimension IV and Hausdorff dimension coincide for compact Euclidean subspaces (see \cite[Theorem 3.13 \& Corollary 3.14 (2)]{DIM4}). This theoretical fact allowed the authors therein to provide the first-known procedure to calculate the Hausdorff dimension in practical applications (see \cite[Algorithm 3.1]{HAU15}). On the other hand, fractal dimension IV also becomes an intermediate model between the box and the Hausdorff dimensions (for additional details, we refer the reader to \cite[Remark 3.15]{DIM4}). On the other hand, fractal dimensions V \& VI do also generalize the Hausdorff model in the context of Euclidean subspaces equipped with their natural fractal structures (see \cite[Corollary 3.11]{DIM4}).

\subsection{IFS-attractors. The natural fractal structure on IFS-attractors. The open set condition}\label{sub:ifs}

First, let $f:X\lto X$ be a self-map defined on a metric space $(X,\rho)$. Recall that $f$ is said to be a Lipschitz self-map, whenever it satisfies that $\rho(f(x),f(y))\leq c\, \rho(x,y)$, for all $x,y\in X$, where $c>0$ is the Lipschitz constant associated with $f$. In particular, if $c<1$, then $f$ is said to be a \emph{contraction}, and we will refer to $c$ as its \emph{contraction factor}.
Further, if the equality in the previous expression is reached, namely, $\rho(f(x),f(y))=c\, \rho(x,y)$, for all $x,y\in X$, then $f$ is called a \emph{similarity}, and its Lipschitz constant is its \emph{similarity factor}, too.

\begin{defn}
For a metric space $(X,\rho)$, let us define an IFS as a finite family $\F=\{f_i\}_{i\in I}$, where $f_i$ is a contraction, for all $i\in I$. Thus, the unique compact set $A\subset X$, which satisfies that $A=\bigcup_{f\in\F}f(A)$, is called the attractor of the IFS $\F$, or IFS-attractor, as well. Further, it is also called a self-similar set. If, in addition to that, all the $f_i$ mappings are similarities, then we will say that $A$ is a strict self-similar set.
\end{defn}

It is a standard fact from Fractal Theory that there exists an atractor for any IFS on a complete metric space.

IFS-attractors can be always equipped with a natural fractal structure, which was first sketched in \cite{BA92}, and formally defined later in \cite[Definition 4.4]{MSG12}. 
Next, we recall the description of such a fractal structure, which becomes essential onwards.

\begin{defn}\label{def:fsifs}
Let $\F$ be an IFS, whose associated IFS-attractor is $K$. The natural fractal structure on $K$ is defined as the countable family of coverings $\ef=\{\Gamma_n\}_{n\in \mathbb{N}}$, where $\Gamma_n=\{f_{\omega}(K):\omega\in I^{n}\}$, for each $n\in \mathbb{N}$. It is worth mentioning that, for a given natural number $n$, and each word $\omega=\omega_1 \ \omega_2\ \ldots \ \omega_n\in I^{n}$, we denote $f_{\omega}=f_{\omega_1}\circ\ \ldots\ \circ f_{\omega_n}$.
\end{defn}

\begin{obs}
Another appropriate description for the levels in such a natural fractal structure can be carried out as follows: $\Gamma_1=\{f_i(K):i\in I\}$, and $\Gamma_{n+1}=\{f_i(A):A\in \Gamma_n,i\in I\}$, for all $n\in \mathbb{N}$.
\end{obs}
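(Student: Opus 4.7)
The plan is to proceed by induction on $n$, showing that the recursive description $\Gamma_1=\{f_i(K):i\in I\}$, $\Gamma_{n+1}=\{f_i(A):A\in \Gamma_n,\, i\in I\}$ yields exactly the family $\{f_\omega(K):\omega\in I^n\}$ from Definition \ref{def:fsifs}. The base case $n=1$ is immediate, since $I^1=I$ and for each single-letter word $\omega=i$ we have $f_\omega=f_i$, so the two prescriptions coincide.

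For the inductive step, I would assume that $\Gamma_n=\{f_\omega(K):\omega\in I^n\}$, and examine the recursive level $\Gamma_{n+1}$. By the recursive definition and the inductive hypothesis,
\begin{equation*}
\Gamma_{n+1}=\{f_i(A):A\in \Gamma_n,\, i\in I\}=\{f_i(f_\omega(K)):i\in I,\, \omega\in I^n\}.
\end{equation*}
Since function composition gives $f_i\circ f_\omega=f_i\circ f_{\omega_1}\circ\cdots\circ f_{\omega_n}=f_{i\,\omega_1\,\ldots\,\omega_n}$, each element $f_i(f_\omega(K))$ has the form $f_\tau(K)$ for the concatenated word $\tau=i\,\omega\in I^{n+1}$. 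Conversely, every word $\tau\in I^{n+1}$ splits uniquely as $\tau=i\,\omega$ with $i=\tau_1\in I$ and $\omega=\tau_2\,\ldots\,\tau_{n+1}\in I^n$, so every $f_\tau(K)$ appears in the right-hand side above. Thus $\Gamma_{n+1}=\{f_\tau(K):\tau\in I^{n+1}\}$, closing the induction.

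The only subtle point, and the one I would be careful to flag, is the bookkeeping with the order of composition in the definition $f_\omega=f_{\omega_1}\circ\cdots\circ f_{\omega_n}$: one must prepend the new letter $i$ at the left (not append it at the right) so that $f_i\circ f_\omega=f_{i\omega}$. This is an indexing convention rather than a genuine mathematical obstacle, but it is the place where a careless reader might stumble. No metric, contraction, or self-similarity property of $K$ or the $f_i$ is needed for this equivalence; it is a purely combinatorial identity between two ways of indexing iterated compositions.
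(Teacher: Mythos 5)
Your induction is correct and is exactly the verification the paper has in mind: the remark is stated without proof precisely because it reduces to the observation that $f_i\circ f_\omega=f_{i\omega}$ under the convention $f_\omega=f_{\omega_1}\circ\cdots\circ f_{\omega_n}$, which is the point you rightly flag. Your care about prepending (rather than appending) the new index matches the paper's composition order, so nothing further is needed.
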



On the other hand, the \emph{open set condition} (OSC in short) becomes a relevant hypothesis required to the similarities $f_i$ of an Euclidean IFS $\F$, in order to guarantee that the pieces $f_i(K)$ of the corresponding IFS-attractor $K$ do not overlap \emph{too much}. Technically, such a condition is satisfied if and only if there exists a non-empty bounded open subset $V\subset \mathbb{R}^d$, such that $\bigcup_{i\in I}f_i(V)\subset V$, where that union remains disjoint (see, e.g., \cite[Section 9.2]{FAL90}).

\subsection{The classical Moran's Theorem}\label{sub:moran}
In \cite[Theorem III]{MOR46} (or see \cite[Theorem 9.3]{FAL90}), it was provided a quite interesting result which allows the calculation of the box dimension for a certain class of Euclidean self-similar sets through the solution of an easy equation only involving a finite number of quantities, namely, the similarity factors that give rise to its corresponding IFS-attractor. Such a classical result is described next.


\begin{namedtheorem}[Moran](1946)
Let $\F$ be an Euclidean IFS satisfying the OSC, whose associated IFS-attractor is $K$. Let us assume that $c_i$ is the similarity factor associated with each similarity $f_i$.
Then $\h(K)=\bc(K)=s$, where $s$ is given by the next expression:
\begin{equation}\label{eq:fal}
\sum_{i=1}^{k}c_{i}^{s}=1.
\end{equation}
Further, for this value of $s$, it is satisfied that $\mathcal{H}_{H}^{s}(K)\in (0,\infty)$.
\end{namedtheorem}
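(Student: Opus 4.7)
The plan is to prove the equality $\h(K)=\bc(K)=s$ and the non-degeneracy $\mathcal{H}_H^s(K)\in(0,\infty)$ via the classical chain $s\le\h(K)\le\bc(K)\le s$. The upper bound follows from the self-similar identity alone, while the lower bound is where the open set condition (OSC) is indispensable; both are glued together with the defining relation $\sum_{i\in I}c_i^s=1$.

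For the upper bound I would iterate $K=\bigcup_{i\in I}f_i(K)$ to obtain $K=\bigcup_{\omega\in I^n}f_\omega(K)$ for every $n$, where $f_\omega=f_{\omega_1}\circ\cdots\circ f_{\omega_n}$ is a similarity of factor $c_\omega=c_{\omega_1}\cdots c_{\omega_n}$, so that $\dm(f_\omega(K))=c_\omega\,\dm(K)$. Taking $n$ large enough that $\max_{\omega}c_\omega\,\dm(K)\le\delta$ and using $\{f_\omega(K)\}_{\omega\in I^n}$ as a $\delta$-cover of $K$,
\[
\mathcal{H}_\delta^s(K)\;\le\;\sum_{\omega\in I^n}c_\omega^s\,\dm(K)^s\;=\;\dm(K)^s\Bigl(\sum_{i\in I}c_i^s\Bigr)^{n}\;=\;\dm(K)^s.
\]
Letting $\delta\to 0$ yields $\mathcal{H}_H^s(K)\le\dm(K)^s<\infty$, hence $\h(K)\le s$. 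For the box-dimension upper bound, given $\delta>0$ I would consider the Moran antichain $W_\delta$ of finite words $\omega$ minimal with $c_\omega\le\delta$. Such $\omega$ also satisfy $c_\omega>c_{\min}\delta$ with $c_{\min}=\min_i c_i$, and summing $c_\omega^s$ over $W_\delta$ telescopes to $1$ by induction on levels, giving $|W_\delta|\le c_{\min}^{-s}\,\delta^{-s}$ and hence $\bc(K)\le s$ through the covering $\{f_\omega(K)\}_{\omega\in W_\delta}$.

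For the lower bound I would push forward the Bernoulli measure on $I^{\mathbb{N}}$ with weights $(c_i^s)_{i\in I}$ under the canonical projection $\pi\colon I^{\mathbb{N}}\to K$, $\pi(\omega)=\lim_n f_{\omega|_n}(x_0)$, to obtain a Borel probability measure $\mu$ on $K$ with $\mu(f_\omega(K))=c_\omega^s$ for every finite word $\omega$. The key step is the \emph{mass distribution principle}: for every ball $B$ of radius $r$ small enough, $\mu(B)\le C\,r^s$ with $C$ independent of $B$. To prove it, I would again invoke the Moran antichain $W_r$; if $\omega,\omega'\in W_r$ are distinct and $\tau$ is their longest common prefix, then $\omega=\tau\alpha$, $\omega'=\tau\beta$ with $\alpha_1\ne\beta_1$, so by the OSC applied to the bounded open witness $V$, the sets $f_\omega(V)$ and $f_{\omega'}(V)$ lie in the disjoint sets $f_{\tau\alpha_1}(V)$ and $f_{\tau\beta_1}(V)$; thus $\{f_\omega(V)\}_{\omega\in W_r}$ is pairwise disjoint, each member has diameter comparable to $r$, and each contains a ball of radius proportional to $r$ because $V$ contains a ball and $f_\omega$ is a similarity of factor $c_\omega$. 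A Euclidean volume comparison then caps the number of $\omega\in W_r$ with $f_\omega(\overline V)\cap B\ne\emptyset$ by a constant depending only on $d$ and $V$; summing $\mu(f_\omega(K))=c_\omega^s=O(r^s)$ over these words produces the required bound on $\mu(B)$. The mass distribution principle then delivers $\mathcal{H}_H^s(K)\ge 1/C>0$ and $\h(K)\ge s$, closing the chain.

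The main obstacle is the packing estimate underlying the mass distribution principle: one must verify rigorously that the Moran antichain produces a pairwise-disjoint family $\{f_\omega(V)\}_{\omega\in W_r}$ with uniformly comparable diameters and inradii, so that a single Euclidean volume argument caps the number of such sets meeting a ball of radius $r$. Everything else—iterating the self-similar identity, constructing $\mu$ as a Bernoulli push-forward, and extracting the box-dimension bound from the cardinality of $W_\delta$—is routine bookkeeping once $\sum_i c_i^s=1$ is available.
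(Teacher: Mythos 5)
Your argument is correct and is essentially the classical proof of Moran's theorem as presented in Falconer's book, which is exactly the source the paper cites for this statement; the paper itself only recalls the theorem in its preliminaries and offers no proof of its own. Two small points to pin down in the packing step: you need $K\subseteq\overline{V}$ so that $f_\omega(K)\cap B\neq\emptyset$ forces $f_\omega(\overline{V})\cap B\neq\emptyset$ (this holds because $\bigcup_{i\in I}f_i(\overline{V})\subseteq\overline{V}$ makes the compact set $\overline{V}$ invariant under the Hutchinson operator, hence it contains the attractor), and in bounding $\mu(B)$ you should sum the cylinder masses $\nu([\omega])=c_\omega^s$ over the antichain rather than assert the exact equality $\mu(f_\omega(K))=c_\omega^s$, which is not needed and would itself require knowing the overlaps are null.
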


It is worth mentioning that Moran also provided in \cite[Theorem II]{MOR46} a weaker version for the result above, under the assumption that all the similarities are equal.

\section{Counterexamples regarding theoretical properties for a fractal dimension definition}\label{sec:def}

Next theorem, which can be found along \cite[Section 2.2]{FAL90}, contains some analytical properties that are satisfied by our key reference for a fractal dimension definition, namely, the Hausdorff dimension. These properties will be used along this section for comparative purposes regarding our models of fractal dimension for a fractal structure as well as the classical ones, namely, both the Hausdorff and the box dimensions.

\begin{teo}\label{teo:fal}
\begin{enumerate}[(1)]
\item \emph{Monotonicity}: if $E\subseteq F$, then $\dih(E)\leq \dih(F)$.
\item \emph{Finite stability}: $\dih(E\cup F)=\max\{\dih(E), \dim(F)\}$.
\item \emph{Countable stability}: if $\{F_i\}_{i\in I}$ is a countable collection of sets, then
\begin{equation*}\label{eq:cstability}
\dih\Bigg(\bigcup_{i\in I}F_i\Bigg)=\sup\{\dih(F_i):i\in I\}.
\end{equation*}
\item \emph{Countable sets}: if $F$ is a countable set, then $\dih(F)=0$.\label{prop:countable}
\item In general, it is not satisfied that $\dih(F)=\dih(\overline{F})$.\label{prop:countable}
\end{enumerate}
\end{teo}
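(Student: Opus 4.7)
The plan is to handle the five statements in order, relying throughout on the characterization (\ref{eq:hreach}) of $\dih$ together with monotonicity and countable subadditivity of the Hausdorff $s$-dimensional measure $\mathcal{H}_H^s$. For (1), I would note that if $E\subseteq F$ and $\{U_j\}_{j\in J}\in \mathcal{C}_\delta(F)$, then the same family also covers $E$, so $\mathcal{H}_\delta^s(E)\leq \mathcal{H}_\delta^s(F)$; letting $\delta\to 0$ gives $\mathcal{H}_H^s(E)\leq \mathcal{H}_H^s(F)$, and then (\ref{eq:hreach}) immediately yields $\dih(E)\leq \dih(F)$.

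For (3), the key auxiliary fact is countable subadditivity of $\mathcal{H}_H^s$. Given $\{F_i\}_{i\in I}$ and $\varepsilon>0$, I would pick for each $i$ a $\delta$-cover $\{U_j^i\}_{j\in J_i}$ of $F_i$ with $\sum_{j}\dm(U_j^i)^s\leq \mathcal{H}_\delta^s(F_i)+\varepsilon/2^i$; the countable concatenation $\bigcup_i \{U_j^i\}_{j\in J_i}$ then lies in $\mathcal{C}_\delta(\bigcup_i F_i)$, giving $\mathcal{H}_\delta^s(\bigcup_i F_i)\leq \sum_i \mathcal{H}_\delta^s(F_i)+\varepsilon$, and sending first $\varepsilon\to 0$ and then $\delta\to 0$ yields $\mathcal{H}_H^s(\bigcup_i F_i)\leq \sum_i \mathcal{H}_H^s(F_i)$. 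Hence if $s>\sup_i \dih(F_i)$ then every $\mathcal{H}_H^s(F_i)=0$, and so $\dih(\bigcup_i F_i)\leq s$; combined with the monotonicity bound from (1) this finishes (3). Statement (2) is the finite specialization of (3).

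For (4), it suffices to check $\dih(\{x\})=0$ for a singleton, which follows from the trivial cover $\{\{x\}\}\in \mathcal{C}_\delta(\{x\})$ of diameter $0$, forcing $\mathcal{H}_H^s(\{x\})=0$ for every $s>0$; writing a countable set $F$ as $\bigcup_{x\in F}\{x\}$ and applying (3) gives $\dih(F)=0$. For the negative statement (5), I would exhibit the classical counterexample $F=\Q\cap [0,1]$: by (4) we have $\dih(F)=0$, whereas $\overline{F}=[0,1]$ satisfies $\dih([0,1])=1$ because its $1$-dimensional Hausdorff measure is, up to a constant factor, Lebesgue measure on the line and hence lies in $(0,\infty)$ on the unit interval.

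The main obstacle is the $\varepsilon/2^i$ bookkeeping inside the subadditivity argument for (3): one must interchange the \emph{pick near-optimal $\delta$-covers} step with the passage $\delta\to 0$, doing so uniformly across a countable family without losing the geometric series that controls the slack. Once this single estimate is in place, items (1), (2), (4) follow essentially by inspection, and (5) is settled by the standard $\Q\cap[0,1]$ example.
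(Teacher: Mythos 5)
Your proposal is correct, and it follows exactly the classical route: monotonicity and countable subadditivity of $\mathcal{H}_H^s$ combined with the characterization (\ref{eq:hreach}), singletons plus countable stability for item (4), and $\Q\cap[0,1]$ for item (5). The paper itself gives no proof of this theorem --- it defers to \cite[Section 2.2]{FAL90} --- and your argument is precisely the standard one found there, so there is nothing to reconcile.
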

It is worth mentioning that the latter property, which we will refer to as $\cldim$, herein, would not be desired (at least at a first glance) to be satisfied by any fractal dimension definition. The key reason was given in \cite[Subsection 3.2]{FAL90}. Indeed, if $\dim(F)=\dim(\overline{F})$, for any subset $F$ of $X$, then it turns out that a ``small'' (countable) set of points can \emph{wreak havoc} with the dimension, since it may be non-zero. This constitutes a technical reason that makes the box dimension be seriously limited from a theoretical point of view. A proof for the properties contained in Theorem \ref{teo:fal} regarding the box dimension can be found in \cite[Subsection 3.2]{FAL90}.
Interestingly, as stated in \cite[Chapter 3]{FAL90}, it turns out that all fractal dimension definitions are monotonic, and most of them are finitely stable. However, some common definitions do not satisfy the countable stability property, and even they may throw positive values for the dimensions of certain countable sets. In fact, this is the case of box dimension.

Recall that upcoming Table \ref{table:1} provides a schematic summary regarding the properties from Theorem \ref{teo:fal} that are satisfied by each fractal dimension definition. We would like to point out that the absence of a \OK\ sign in that table has been properly justified by an appropriate counterexample along the present Section \ref{sec:def}.
\begin{table} 
\centering
\begin{center}
\begin{tabular}{@{} cl*{5}c @{}}
        & & \multicolumn{5}{c}{Theoretical properties} \\[2ex]
        & & \rot{Monotonicity} & \rot{F-stability} & \rot{C-stability} & \rot{$0$-countably} 
        & \rot{$\cl$-$\, \dim$} \\
        \cmidrule{2-7}
        & $\bc$             & \OK & \OK  &  &   &  \OK     \\
        & $\uno$             & \OK & \OK  &   &   &       \\
        & $\dos$             & \OK &   &   &   &       \\
        & $\tres$             & \OK & \OK  &   &   &       \\
        & $\cuatro$ & \OK & \OK  &   &   & \OK      \\
        & $\cinco$           & \OK & \OK  & \OK  & \OK  &       \\
        & $\seis$             & \OK & \OK  & \OK  &  \OK &       \\
\rot{\rlap{~Fractal dimensions}}
        & $\dih$             & \OK & \OK  & \OK  &  \OK &       \\
        \cmidrule[1pt]{2-7}
\end{tabular}
\end{center}
\caption{The table above contains those analytical properties that are satisfied by all the fractal dimension models considered along this paper. Note that F-stability refers to the finite stability property for a fractal dimension $\dim$, C-stability refers to the countable stability, $0$-countably means that a fractal dimension $\dim$ is zero for countable subsets, and finally, $\cl$-$\dim$ refers to the following property: $\dim(F)=\dim(\overline{F})$.}\label{table:1}
\end{table}

Firstly, we would like to highlight that fractal dimensions I, II, III \& IV are not countably stable. In fact, as we will show next, there exist countable Euclidean subsets (equipped with an induced natural fractal structure), whose fractal dimensions I, II, III \& IV are non-zero. Recall that fractal dimensions I and III are someway expected to not satisfy such a property, since their description is similar to the box dimension (in the case of fractal dimension I), or at least, they generalize it in the context of Euclidean spaces equipped with their natural fractal structures (as it happens with fractal dimension III, see \cite[Theorem 4.15]{DIM3}, but also with fractal dimension I, see \cite[Theorem 3.5]{DIM1}). Surprisingly, fractal dimension IV, which matches the Hausdorff dimension for compact Euclidean subsets from a discrete point of view (recall \cite[Theorem 3.13]{DIM4}), does not satisfy such a property, too. 

\begin{ct}\label{ct:0}
There exists a countable subset $F$ of $X$, such that $\dim_{\ef}^{k}(F)\neq 0$, for a certain fractal structure $\ef$, where $k=1,2,3,4$. 
\end{ct}

\begin{proof}
Let us consider $X=[0,1]$, $F=\mathbb{Q}\cap X$, and $\ef$ be the natural fractal structure on $X$, whose levels are defined as in Eq. (\ref{eq:1}). 
\begin{enumerate}[(i)]
\item (See \cite[Proposition 3.6 (4)]{DIM1}). Hence, $N_{n}(F)=2^{n}$, so $\uno(F)=\dos(F)=1$.\label{ct:01}
\item (See \cite[Proposition 4.16 (2)]{DIM3}). Since fractal dimension III generalizes fractal dimension I in the context of Euclidean spaces equipped with their natural fractal structures (see \cite[Theorem 4.15]{DIM3}), then
$\dim_{\ef}^{3}(F)=\dim_{\ef}^{1}(F)=1$, just applying the arguments stated previously for Counterexample \ref{ct:0} (\ref{ct:01}).\label{ct:12}
\item (See \cite[Proposition 3.4 (1)]{DIM4}). $F$ is a countable subset such that $\overline{F}=[0,1]$. Thus, \cite[Theorem 3.13]{DIM4} leads to $\cuatro(F)=\dih(\overline{F})=1$.
\end{enumerate}
\end{proof}

On the other hand, it is worth mentioning that fractal dimension II, which generalizes fractal dimension I, in the sense that it allows additionally that different diameter sets could appear in each level of the fractal structure, not even satisfies the finite stability property (and hence, this cannot be countably stable, too), as the following counterexample states. Thus, it turns out that all the fractal dimension definitions considered along this paper are finitely stable with the exception of fractal dimension II (see Table \ref{table:1}). 

\begin{ct}(\cite[Example 4]{DIM1})\label{ct:6}
Neither the lower fractal dimension II nor the upper fractal dimension II are finitely stable.
\end{ct}

\begin{proof}
In fact, let $\ef_{1}$ be the natural fractal structure on $C_{1}$ as an IFS-attractor, where $C_{1}$ is the middle third Cantor set on $[0,1]$. Additionally, let also $\ef_{2}$ be a fractal structure on $C_{2}=[2,3]$, defined as $\ef_{2}=\{\Gamma_{2,n}\}_{n\in \mathbb{N}}$, where 
$$\Gamma_{2,n}=\Big\{\Big[\frac{k}{2^{2n}},\frac{k+1}{2^{2n}}\Big]:k\in \{2^{2n+1},2^{2n+1}+1, \,\ldots\,, 3\, 2^{2n}-1\}\Big\}.$$
Thus, let $\ef=\{\Gamma_{n}\}_{n\in \mathbb{N}}$ be a fractal structure on $C=C_{1}\cup C_{2}$, where its levels are given by $\Gamma_{n}=\Gamma_{1,n}\cup \Gamma_{2,n}$. Finally, simple calculations lead to $\dos(C_{1})=\log 2/ \log 3$, as well as to $\dos(C_{2})=1$, whereas $\dos(C)=\log 4/ \log 3>1$.
\end{proof}

As a consequence of both Counterexamples \ref{ct:0} and \ref{ct:6}, it holds that neither of those fractal dimension models involved therein, namely, fractal dimensions I, II, III, and IV, is countably stable. To deal with, recall that unlike fractal dimensions I and II, it holds that fractal dimensions III and IV do always exist. 

\begin{cor}
Neither the (lower/upper) fractal dimensions I, II, nor the fractal dimensions III, IV are countably stable.
\end{cor}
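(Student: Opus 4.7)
The plan is to derive the corollary directly from Counterexample \ref{ct:0}, via the general principle that once every singleton has fractal dimension zero, countable stability would force \emph{every} countable set to have fractal dimension zero. It therefore suffices to exhibit, for each of the four models, a countable set of strictly positive dimension, which Counterexample \ref{ct:0} already supplies.

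First I would briefly verify that, for the natural fractal structure $\ef$ on $[0,1]$, every singleton $\{x\}$ has vanishing fractal dimension under the models I--IV. For $\uno$ and $\dos$, at most two intervals of $\Gamma_n$ meet $\{x\}$, so $N_n(\{x\})\le 2$, and the defining quotients $\log N_n/(n\log 2)$ and $\log N_n/(-\log \delta(\{x\},\Gamma_n))$ tend to zero. For $\tres$ and $\cuatro$, one can cover $\{x\}$ by a single element $A\in \Gamma_l$ with $\dm(A)=2^{-l}$, giving $\mathcal{H}_{n,k}^s(\{x\})\le 2^{-ls}\to 0$ as $l\to \infty$ for every $s>0$, and hence $\tres(\{x\})=\cuatro(\{x\})=0$.

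Next, I would apply this to $F=\Q\cap [0,1]=\bigcup_{x\in F}\{x\}$. If any of the fractal dimensions I--IV (including the lower and upper variants of $\uno$ and $\dos$) were countably stable, the decomposition above would force $\dim_\ef^k(F)=\sup_{x\in F}\dim_\ef^k(\{x\})=0$. However, Counterexample \ref{ct:0} establishes $\dim_\ef^k(F)=1$ for $k=1,2,3,4$; moreover, in the cases $k=1,2$ the relevant sequences have honest limits (since $N_n(F)=2^n$ and $\delta(F,\Gamma_n)=2^{-n}$), so the lower and the upper versions agree and both equal $1$. This contradiction completes the proof, and I do not foresee any substantive obstacle: the computational content is already carried by Counterexample \ref{ct:0}, so the present corollary amounts to a logical repackaging, the only mild care being to cover the lower/upper variants of $\uno$ and $\dos$ uniformly, which is handled for free by the two-sided existence of the defining limits at $F$.
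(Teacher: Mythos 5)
Your proposal is correct and follows essentially the same route as the paper, which simply deduces the corollary from Counterexample \ref{ct:0} (the set $F=\Q\cap[0,1]$ with $\dim_{\ef}^{k}(F)=1$ for $k=1,2,3,4$), together with the implicit fact that singletons have dimension zero so that countable stability would force $\dim_\ef^k(F)=0$. The only difference is that you spell out the singleton computation and rely solely on Counterexample \ref{ct:0}, whereas the paper additionally cites Counterexample \ref{ct:6} (failure of finite stability for dimension II) as a second, redundant route for that case; both are valid.
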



The following counterexample points out that fractal dimensions I, II, III, V, and VI, do not satisfy the $\cldim$ property.

\begin{ct}\label{ct:1}
There exist a subset $F$ of a certain space $X$, as well as a locally finite starbase fractal structure $\ef$, such that $\dim_{\ef}^{k}(F)\neq \dim_{\ef}^{k}(\overline{F})$, for $k=1,2,3,5,6$.
\end{ct}

\begin{proof}

\begin{enumerate}[(i)]
\item (See \cite[Proposition 3.6 (5)]{DIM1}).\label{ct:011}
Let $\ef=\{\Gamma_{n}\}_{n\in \mathbb{N}}$ be a fractal structure defined on $X=([0,1]\times \{0\})\, \bigcup\, \{\{\frac{1}{2^{n}}\}\times [0,1]:n\in \mathbb{N}\}$, whose levels are given by
\begin{equation*}
\begin{split}
\Gamma_{n} & =\Big\{\Big[\frac{k}{2^{n}},\frac{k+1}{2^{n}}\Big]\times \{0\}:k\in \{0,1,\ldots,2^{n}-1\}\Big\}\\
&\bigcup\, \Big\{\Big\{\frac{1}{2^{m}}\Big\}\times \Big[\frac{k}{2^{n}},\frac{k+1}{2^{n}}\Big]:k\in \{0,1,\ldots,2^{n}-1\}, m\in \mathbb{N}\Big\}.
\end{split}
\end{equation*}
(see Fig. (\ref{fig:nowaks}) for a sketch regarding the first level of that fractal structure).
\begin{center}
\begin{figure}
\centering
\includegraphics[width=80mm, height=60mm]{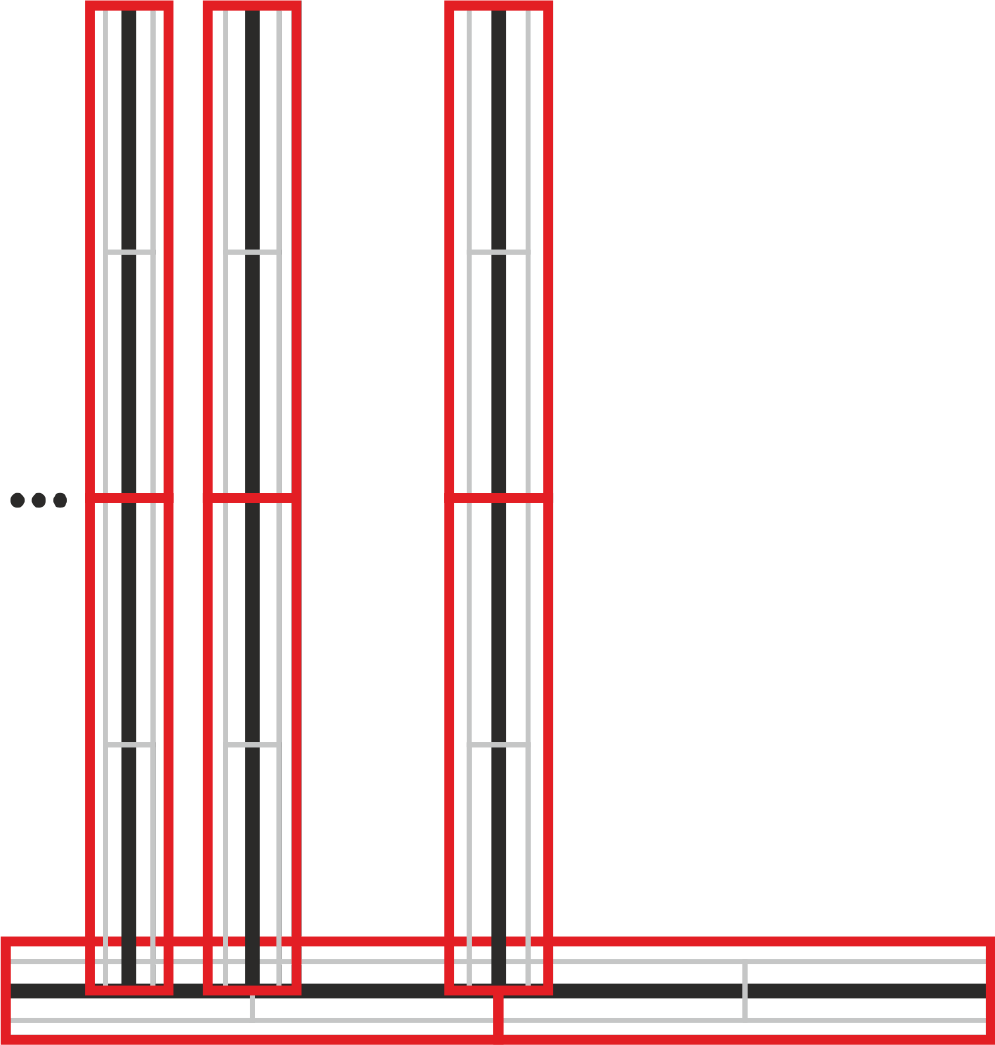} 
\caption{Sketch regarding the first level in the construction of the fractal structure appeared in Counterexample \ref{ct:1} (\ref{ct:011}).}\label{fig:nowaks}
\end{figure}
\end{center}
Moreover, let $F=\bigcup_{k\in \mathbb{N}}(\frac{1}{2^{k+1}},\frac{1}{2^{k}})\times \{0\}$, be a subset of $X$. Thus, $\overline{F}=[0,1]\times \{0\}$, which implies that $N_{n}(F)=2^{n}$, and $N_{n}(\overline{F})=\infty$. Hence, $\uno(F)=1$, and $\uno(\overline{F})=\infty$, respectively. These arguments also become valid in order to justify that fractal dimension II does not satisfy the $\cldim$ property, too. In fact, recall that fractal dimension II generalizes fractal dimension I (in the sense of \cite[Theorems 4.6 \& 4.7]{DIM1}), so any counterexample valid for fractal dimension I, still remains valid for fractal dimension II.
\item (See \cite[Proposition 4.16 (4)]{DIM3}).
Let us consider the same list $(F,X,\ef)$, as provided in Counterexample \ref{ct:1} (\ref{ct:011}).
Thus, $\dim_{\ef}^{3}(F)=1$, whereas
$\mathcal{H}_{n}^{s}(\overline{F})=\sum_{i=0}^{2^n-1}\frac{1}{2^{ns}}+\sum_{i=1}^{\infty}\frac{1}{2^{ns}}=\infty$, which implies that $\dim_{\ef}^{3}(\overline{F})=\infty$.
\item (See \cite[Proposition 3.4 (4)]{DIM4}).
Let $\ef$ be the natural fractal structure on $X=[0,1]$, namely, $\ef=\{\Gamma_n\}_{n\in \mathbb{N}}$, whose levels are defined as 
\begin{equation}\label{eq:1}
\Gamma_{n}=\Big\{\Big[\frac{k}{2^{n}},\frac{k+1}{2^{n}}\Big]: k\in \{0,1,\ldots,2^{n}-1\}\Big\},
\end{equation}
and let $F=\mathbb{Q}\cap X$. Thus, $F$ is a countable subset such that $\overline{F}=[0,1]$. Moreover, since fractal dimensions V, VI are countably stable, due to \cite[Proposition 3.4 (3)]{DIM4}), then $\cinco(F)=\seis(F)=0$. However, $\cinco(\overline{F})=\dih(\overline{F})=\seis(\overline{F})=1$, since both fractal dimensions V, VI do generalize the Hausdorff dimension on a Euclidean subspace with the natural fractal structure (see \cite[Corollary 3.11]{DIM4}).
\end{enumerate}
\end{proof}

\section{Some differences between fractal dimensions I \& II}\label{sec:dim1}

Recall that fractal dimension I model, which was introduced previously in Definition \ref{def:typebox} (\ref{def:dim1}), actually considers all the elements in each level of a given fractal structure as having the same ``size", equal to $1/2^{n}$. According to that, the natural fractal structure which any Euclidean space can be equipped with, allows to calculate the box dimension through another kind of tilings, such as triangulations on the plane, for instance. In particular, this fact becomes quite interesting since any compact surface always owns a triangulation.

On the other hand, if in addition to the fractal structure, a distance function is available in the space, then this could be applied to ``measure'' the size of the elements in each level of the fractal structure at the same time we use the fractal structure. This is the case, for instance, of any Euclidean space, where we can always consider both the natural fractal structure as well as the Euclidean metric. 

The next counterexample points out that fractal dimension I depends on a fractal structure, whereas the upcoming remark establishes that fractal dimension II also depends on a metric. 

\begin{ct}(\cite[Remark 3.11]{DIM1})\label{ct:4}
Fractal dimension I depends on the fractal structure we choose in order to calculate it. Mathematically, let $X$ be a subspace of an Euclidean space.
Then there exist a subset $F$ of $X$, as well as two different fractal structures, $\Gamma_1$ and $\Gamma_2$, such that $\dim_{\ef_1}^1(F)\neq \dim_{\ef_2}^1(F)$. 
\end{ct}

\begin{proof}
Firstly, let $\ef_{1}$ be the natural fractal structure on the middle third Cantor set $C$. Thus, by \cite[Theorem 3.5]{DIM1}, $\dim_{\ef_{1}}^{1}(C)=\dim_{B}(C)=\log 2/ \log 3$, where the second equality holds by \cite[Example 3.3]{FAL90}. On the other hand, let $\ef_{2}$ be the natural fractal structure on $C$ as a self-similar set (recall Definition \ref{def:fsifs}). Then easy calculations lead to $\dim_{\ef_{2}}^{1}(C)=1$, since in each level $\Gamma_{2,n}$ of the fractal structure $\ef_{2}$, there are $2^{n}$ ``subintervals" whose lengths are equal to $1/3^n$.
\end{proof}

\begin{obs}(\cite[Remark 4.9]{DIM1})
In Counterexample \ref{ct:4}, it was shown that the fractal dimension I and the box dimension for the middle third Cantor set $C$ are not equal. In fact, recall that $\bc(C)=\log 2/ \log 3$, whereas $\uno(C)=1$. Notice that such fractal dimensions have been calculated with respect to different fractal structures. In fact, while the natural fractal structure on $C$ as an Euclidean subset has been applied to calculate the box dimension, the natural fractal structure on $C$ as a self-similar set has been chosen for fractal dimension I calculation purposes.
However, if the natural fractal structure on $C$ as a self-similar set is chosen again, then simple calculations allow to prove both the fractal dimension II and the box dimension for $C$ are equal:
$$\dos(C)=\lim_{n\to \infty}\frac{\log 2^{n}}{-\log 3^{-n}}=\frac{\log 2}{\log 3}=\bc(C),$$
since in level $n$ of that fractal structure, there are $2^{n}$ ``subintervals" whose diameters are equal to $1/3^{n}$.
\end{obs}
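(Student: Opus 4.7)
The plan is to verify the remark's computation by unpacking Definition \ref{def:typebox}\,(\ref{def:dim2}) against the natural fractal structure on $C$ as a self-similar set (Definition \ref{def:fsifs}). I would begin by recalling that the middle-third Cantor set $C$ is the attractor of the IFS $\F = \{f_1, f_2\}$ with $f_1(x) = x/3$ and $f_2(x) = (x+2)/3$, both similarities of ratio $1/3$. The level-$n$ covering of the associated fractal structure $\ef_2$ is then
$$\Gamma_{2,n} = \{f_\omega(C) : \omega \in \{1,2\}^n\},$$
where $f_\omega = f_{\omega_1} \circ \cdots \circ f_{\omega_n}$ is itself a similarity of ratio $3^{-n}$.

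Next, I would compute the two quantities needed in the fractal dimension II formula. For $N_n(C)$, note that each $f_\omega(C)$ is a non-empty subset of $C$ by the self-similarity identity $C = \bigcup_{\omega} f_\omega(C)$, hence automatically intersects $C$; moreover, since $f_1(C) \subset [0,1/3]$ and $f_2(C) \subset [2/3,1]$ are disjoint, a straightforward induction on $n$ yields $2^n$ pairwise distinct cylinder sets, so $N_n(C) = 2^n$. For the diameter, since $\dm(C) = 1$ and $f_\omega$ is a similarity of ratio $3^{-n}$, every element of $\Gamma_{2,n}$ has diameter $3^{-n}$, hence $\delta(C, \Gamma_{2,n}) = 3^{-n}$.

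Substituting into Definition \ref{def:typebox}\,(\ref{def:dim2}) then gives
$$\dos(C) = \lim_{n\to\infty} \frac{\log 2^n}{-\log 3^{-n}} = \frac{\log 2}{\log 3},$$
which matches $\bc(C)$ by the classical computation in \cite[Example 3.3]{FAL90}. The only delicate step is verifying $N_n(C) = 2^n$ exactly (no collapsing of cylinders), but this is immediate from the disjointness of $f_1(C)$ and $f_2(C)$ inherited from the OSC for the Cantor IFS; beyond that, the argument is a direct substitution and offers no real obstacle.
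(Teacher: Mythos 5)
Your proposal is correct and follows exactly the paper's own (very brief) justification: identify the $2^n$ level-$n$ cylinders of diameter $3^{-n}$ in the natural fractal structure on $C$ as a self-similar set, so that $N_n(C)=2^n$ and $\delta(C,\Gamma_n)=3^{-n}$, and substitute into the definition of $\dos$. The extra care you take in checking that no cylinders collapse (via disjointness of $f_1(C)$ and $f_2(C)$) is a detail the paper leaves implicit, but it is the same argument.
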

According to that, the fractal dimension II value for the middle third Cantor set (equipped with its natural fractal structure as an Euclidean subpace) agrees with the box dimension for such a self-similar set.
Even more, though the value obtained for the fractal dimension I of $C$ may seem counterintuitive at a first glance, it still becomes possible to justify it through its fractal dimension II. Once again, the key reason lies in the advanced fact that fractal dimension I only depends on the chosen fractal structure. This is emphasized in the following remark. 

\begin{obs}(\cite[Remark 4.10]{DIM1})\label{obs:40}
Fractal dimension I only depends on a fractal structure, whereas fractal dimension II also depends on the (``maximum'') diameter of the elements in each level of that fractal structure. To show this underlying difference, we construct a family of spaces such that from the point of view of fractal structures are the same.
\end{obs}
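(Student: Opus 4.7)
The statement is a meta-remark about how the two dimension models detect different information from a fractal structure, so the argument should take the form of a one-parameter family whose members share the same combinatorial fractal structure yet differ metrically. My plan is to build, for every real $\lambda>2$, the self-similar Cantor-like set $C_\lambda\subset[0,1]$ generated by the IFS $\{f_1(x)=x/\lambda,\ f_2(x)=1-(1-x)/\lambda\}$, equipped with its natural fractal structure $\ef_\lambda$ as an IFS-attractor (Definition~\ref{def:fsifs}). Each level $\Gamma_n^\lambda$ consists of exactly $2^n$ pieces $f_\omega(C_\lambda)$, $\omega\in\{1,2\}^n$, arranged along the same binary refinement tree; the inclusion poset of $\bigcup_n\Gamma_n^\lambda$ is therefore isomorphic across the whole family, which is the precise sense in which these fractal structures are ``the same''.

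The first computation is immediate. Since $N_n(C_\lambda)=2^n$ for every $\lambda>2$ and every $n$, Definition~\ref{def:typebox}(\ref{def:dim1}) yields
\begin{equation*}
\uno(C_\lambda)=\lim_{n\to\infty}\frac{\log 2^n}{n\log 2}=1,
\end{equation*}
independently of $\lambda$. Hence fractal dimension I sees none of the metric data and cannot distinguish the members of the family.

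The second computation injects the metric. Each element of $\Gamma_n^\lambda$ is a similar copy of $C_\lambda$ with factor $\lambda^{-n}$, so $\delta(C_\lambda,\Gamma_n^\lambda)=\lambda^{-n}\,\dm(C_\lambda)$. Writing $c=\dm(C_\lambda)>0$, Definition~\ref{def:typebox}(\ref{def:dim2}) gives
\begin{equation*}
\dos(C_\lambda)=\lim_{n\to\infty}\frac{\log 2^n}{-\log(c\,\lambda^{-n})}=\lim_{n\to\infty}\frac{n\log 2}{n\log\lambda-\log c}=\frac{\log 2}{\log\lambda},
\end{equation*}
which varies strictly with $\lambda$; e.g.\ $\lambda=3$ recovers the classical value $\log 2/\log 3$ and $\lambda=4$ gives $1/2$. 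Thus $\{(C_\lambda,\ef_\lambda)\}_{\lambda>2}$ carries the same combinatorial fractal structure while being told apart by fractal dimension II through the diameters $\delta(C_\lambda,\Gamma_n^\lambda)$. The only delicate point is pinning down what ``same fractal structure'' should mean here: it is the isomorphism of the level posets, and not a homeomorphism of the underlying spaces, since the metrics are genuinely different. This combinatorial information is exactly what $\uno$ depends on, and is precisely what $\dos$ enriches by weighting each piece by its actual diameter.
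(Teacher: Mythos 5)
Your proposal is correct and is essentially the paper's own argument: the paper likewise takes a one-parameter family of Cantor-like attractors generated by two similarities with a common ratio $c_i\in[\frac13,\frac12)$, equips each with its natural fractal structure as a self-similar set, and observes that $\uno$ is identically $1$ while $\dos(C_i)=\log 2/(-\log c_i)$ varies with the ratio (your $\lambda=1/c_i$). The only cosmetic difference is that the paper restricts the ratios to $[\frac13,\frac12)$ so as to also exhibit $\dos(C_i)\to 1=\uno(C)$ as $c_i\to\frac12$, whereas you allow all $\lambda>2$.
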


\begin{proof}
To deal with, let us consider slight modifications regarding the standard middle third Cantor set $C$, which we will refer to as $C_{i}$. Further, let us assume that their associated similarity factors are $c_{i}\in [\frac{1}{3},\frac{1}{2})$, for each of the two similarities that give rise to $C_{i}$. Hence, $\delta(C_{i},\Gamma_{n})=c_{i}^{n}$, for all $n\in \N$. In addition to that, let us consider the natural fractal structure on each space $C_{i}$ as a self-similar set, denoted by $\ef_{i}:i=1,2$. Then easy calculations lead to (or apply \cite[Theorem 4.19]{DIM1}, as well):
$$\bc(C_{i})=\dos(C_{i})=\frac{\log 2}{-\log c_{i}}\longrightarrow 1=\uno(C),$$
wherever $c_{i}\to 1/2$.
\end{proof}

\section{Justifying some requirements to several fractal dimension definitions regarding fractal structures}\label{sec:defs}
In this section, we provide some counterexamples in order to justify some properties, than being quite natural to be assumed regarding the levels of a fractal structure, have to be required to guarantee that fractal dimensions III, IV and V, do behave like the Hausdorff dimension (see Fig. (\ref{fig:1}) at the end of this section).
This has been carried out along forthcoming Subsection \ref{sub:def2}. Previously, in Subsection \ref{sub:def1}, we explain why the expression $\delta(F,\Gamma_n)$ should be used for fractal dimension II calculation purposes, instead of $\delta(\Gamma_n)$, which would be, at least a first glance, another valid option.

\subsection{Why $\delta(F,\Gamma_n)$ is used for fractal dimension II instead of $\delta(\Gamma_n)$.}\label{sub:def1}

As it was stated previously, the fractal dimension II for any subset $F$ of $X$, ``measures'' the size of the elements in any level of a fractal structure through a distance function. 
This is done by means of $\delta(F,\Gamma_n)=\sup\{\dm(A):A\in \anf\}$, where $\anf=\{A\in \Gamma_n:A\cap F\neq \mpty\}$. 
On the other hand, it seems that $\delta(\Gamma_{n})=\sup\{\dm(A):A\in \Gamma_n\}$ could be considered, at least at a first glance, in Definition \ref{def:typebox} (\ref{def:dim2}), instead of $\delta(F,\Gamma_{n})$. However, $\delta(F,\Gamma_{n})$ yields a wider applied dimension than $\delta(\Gamma_{n})$, as it is shown next.


\begin{ct}
There exist a bounded Euclidean subset $F$, a fractal structure $\ef$ on $\R^d$, and a natural number $n_0$, such that $\delta(\Gamma_n)=\infty$, for all $n\in \N$, whereas $\delta(F,\Gamma_n)<\infty$, for all $n\geq n_0$.
\end{ct}

\begin{proof}
In fact, let $\ef=\{\Gamma_n\}_{n\in \N}$, be a finite fractal structure defined on the Euclidean space $\R^{d}$, whose levels are defined by
\begin{equation*}
\begin{split}
\Gamma_{n}&=\Bigg\{\Bigg[\frac{k_{1}}{2^{n}},\frac{k_{1}+1}{2^{n}}\Bigg]\times\ \ldots\ \times \Bigg[\frac{k_{d}}{2^{n}},\frac{k_{d}+1}{2^{n}}\Bigg]:k_1,\,\ldots\,,k_d\in \{-n2^{n}, \ldots, n2^{n}-1\}\Bigg\}\\
& \cup \{\R^{d}\setminus (-n,n)^{d}\}.
\end{split}
\end{equation*}
Hence, for any bounded subset $F$ of $\R^d$, it holds that $\delta(\Gamma_{n})=\infty$, for all $n\in \N$, where such a ``diameter'' has been calculated with respect to the Euclidean distance. However, there exists a natural number $n_{0}$, such that $\delta(F,\Gamma_{n})<\infty$, for all $n\geq n_{0}$.
\end{proof}

\subsection{Why\, $\delta(F,\Gamma_n)$ must be a $0$-convergent decreasing sequence in several Hausdorff type models of fractal dimension for a fractal structure}\label{sub:def2}

Next, we point out that the ``natural'' assumption consisting of $\delta(F,\Gamma_n)\to 0$ becomes necessary to guarantee that some Hausdorff type fractal dimensions for a fractal structure, do indeed behave like that classical model.

\begin{ct}\label{obs:4}
If $\delta(F,\Gamma_{n})\to 0$, then $\inf\{s:\mathcal{H}_k^{s}(F)=0\} =\sup\{s:\mathcal{H}_k^{s}(F)=\infty\}$, for $k=3,4,5$. 
Otherwise, that equality cannot be guaranteed, in general.
\end{ct}

\begin{proof}
In fact, let $\ef=\{\Gamma_n\}_{n \in \N}$, be a fractal structure defined on $X=[0,1]$, whose levels are given as 
$$\Gamma_n=\Bigg\{\Bigg[0,\frac{1}{2}\Bigg],\Bigg[\frac{1}{2},1\Bigg]\Bigg\} \bigcup \Bigg\{\Bigg[\frac{k}{2^n},\frac{k+1}{2^n}\Bigg] :k\in \{1,\ldots, 2^n-1\}\Bigg\}.$$
Then the four following hold for all $n\in\N$:
\begin{itemize}
\item $\delta(X,\Gamma_n)=1/2$. 
\item Moreover, 
$$\mathcal{H}_{n,3}^s(X)=\inf \Bigg\{\frac{2}{2^s}+(2^{m}-1)\,\frac{1}{2^{ms}}:m \geq n\Bigg\}.$$ 
Hence, 
\begin{equation*}
\mathcal{H}_{3}^s(X)=
\begin{cases} 
\infty & \mbox{if } s<1 \\ 
2^{1-s} & \mbox{if } s>1. 
\end{cases} 
\end{equation*}
\item On the other hand, observe that any cover of $X$ through elements of $\Gamma_n$, for some $n \in \mathbb{N}$, must contain the set $[0,\frac{1}{2}]$. Accordingly, $\mathcal{H}_{n,4}^s(X) \geq \frac{1}{2^s}$, for each $n \in \N$, so $\mathcal{H}_{4}^s(X) \geq \frac{1}{2^s}$. Thus, for $s>1$, it holds that $\mathcal{H}_{4}^s(X) = \frac{1}{2^s}$ (since for the natural fractal structure, $\mathcal{H}_{4}^s(X) = 0$), whereas for $s<1$, $\mathcal{H}_{4}^s(X) = \frac{2}{2^s}$ (since we can use the cover $\{ [0,\frac{1}{2}],[\frac{1}{2},1] \}$).
\item Similar arguments to those applied to deal with the fractal dimension IV case allows us to affirm that
\begin{equation*}
\mathcal{H}_{5}^s(X)=
\begin{cases} 
2^{1-s} & \mbox{if } s<1 \\ 
2^{-s} & \mbox{if } s>1. 
\end{cases} 
\end{equation*}
\end{itemize}
\end{proof}

However, for $k=6$, it holds that the condition $\delta(F,\Gamma_{n})\to 0$ is not required to reach the identity $\inf\{s:\mathcal{H}_k^{s}(F)=0\} =\sup\{s:\mathcal{H}_k^{s}(F)=\infty\}$, though it may be desirable.
In fact, for instance, in Counterexample \ref{obs:4}, it holds that any cover of $[0,1]$ by elements of level $n$, for some $n \in \mathbb{N}$, must contain the subinterval $[0,\frac{1}{2}]$. Thus, $\mathcal{A}_{\delta,6}(X)=\emptyset$, for $\delta<\frac{1}{2}$, so $\mathcal{H}_{6}^s(X)=\infty$, for each $s \geq 0$. Therefore, $\seis(X)=\infty$. 

\begin{lema}\label{lema:6}
$\inf\{s:\mathcal{H}_6^{s}(F)=0\} =\sup\{s:\mathcal{H}_6^{s}(F)=\infty\}$.
\end{lema}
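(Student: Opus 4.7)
The plan is to establish the familiar ``critical value'' structure of Hausdorff-type dimensions in the setting of $\mathcal{H}_6^s$. Specifically, I want to show that $s\mapsto \mathcal{H}_6^s(F)$ drops from $\infty$ to $0$ at a single threshold; once that is known, both the infimum and the supremum in the statement coincide with that common threshold.

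The crucial ingredient is a standard interpolation inequality. For any $0\le s<t$, any $\delta>0$, and any family $\{A_j\}_{j\in J}\in\mathcal{A}_{\delta,6}(F)$, I would use $\dm(A_j)\le\delta$ to obtain
\[
\sum_{j\in J}\dm(A_j)^t=\sum_{j\in J}\dm(A_j)^{t-s}\dm(A_j)^s\le \delta^{t-s}\sum_{j\in J}\dm(A_j)^s.
\]
Taking infima over $\mathcal{A}_{\delta,6}(F)$ yields $\mathcal{H}_{\delta,6}^t(F)\le \delta^{t-s}\,\mathcal{H}_{\delta,6}^s(F)$. This inequality has two immediate consequences. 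First, for $\delta\le 1$ the quantity $s\mapsto \mathcal{H}_{\delta,6}^s(F)$ is non-increasing, and hence so is the limit $\mathcal{H}_6^s(F)$. Second, if $\mathcal{H}_6^s(F)<\infty$, then $\mathcal{H}_{\delta,6}^s(F)$ stays bounded for all sufficiently small $\delta$, so letting $\delta\to 0$ in the above inequality forces $\mathcal{H}_6^t(F)=0$ for every $t>s$. Equivalently, if $\mathcal{H}_6^t(F)>0$ then $\mathcal{H}_6^s(F)=\infty$ for every $s<t$.

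Now let $s_0=\inf\{s:\mathcal{H}_6^s(F)=0\}$ and $s_1=\sup\{s:\mathcal{H}_6^s(F)=\infty\}$, with the usual conventions $\inf\emptyset=\infty$ and $\sup\emptyset=0$. For the inequality $s_1\le s_0$, note that any $s>s_0$ admits some $s'\in(s_0,s)$ with $\mathcal{H}_6^{s'}(F)=0$; by the monotonicity above $\mathcal{H}_6^s(F)=0$, so $s\notin\{t:\mathcal{H}_6^t(F)=\infty\}$, giving $s_1\le s_0$. For $s_0\le s_1$, take any $s<s_0$ and pick $t\in(s,s_0)$; then $\mathcal{H}_6^t(F)\ne 0$ (otherwise $t$ would belong to the infimum set, contradicting $t<s_0$), which by the contrapositive of the conclusion of the previous paragraph forces $\mathcal{H}_6^s(F)=\infty$, so $s\le s_1$, and letting $s\uparrow s_0$ gives $s_0\le s_1$. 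Combining both estimates yields $s_0=s_1$, which is exactly the stated identity. The only potential obstacle is book-keeping around the degenerate cases in which $\mathcal{A}_{\delta,6}(F)=\emptyset$ for small $\delta$ (so that $\mathcal{H}_6^s(F)\equiv\infty$) or in which $\mathcal{H}_6^s(F)\equiv 0$; in both scenarios the two quantities $s_0$ and $s_1$ agree by direct inspection under the chosen conventions, so these cases cause no real difficulty.
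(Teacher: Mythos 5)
Your proposal is correct and follows essentially the same route as the paper: the identical interpolation inequality $\mathcal{H}_{\delta,6}^{t}(F)\leq \delta^{t-s}\,\mathcal{H}_{\delta,6}^{s}(F)$, the limit $\delta\to 0$ to conclude that $\mathcal{H}_6^{s}(F)<\infty$ forces $\mathcal{H}_6^{t}(F)=0$ for $t>s$, and then the identification of the common critical value. The only difference is that you spell out the final book-keeping (the two inequalities between the infimum and the supremum, plus the degenerate cases), which the paper leaves implicit.
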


\begin{proof}
Fix $\delta>0$. Let $\mathcal{C}\in \mathcal{A}_{\delta,6}(F)$, and $t$, $s$ be positive real numbers. 
Then it holds that
\begin{equation}\label{eq:ineq}
\sum_{A\in\mathcal{C}} \dm(A)^{t}\leq \delta^{t-s}\, \sum_{A\in\mathcal{C}} \dm(A)^{s}.
\end{equation}
Hence,
\begin{equation}\label{eq:ineq1} 
\mathcal{H}_{\delta,6}^{t}(F)\leq \delta^{t-s} \ \mathcal{H}_{\delta,6}^{s}(F).
\end{equation}
Moreover, if we take limits as $\delta \to 0$ in Eq. (\ref{eq:ineq1}), then
$$\mathcal{H}_6^{t}(F)\leq \mathcal{H}_6^{s}(F)\, \lim_{\delta \to 0}\delta^{t-s}.$$
Accordingly, if it is assumed that $\mathcal{H}_6^{s}(F)<\infty$, provided that $t>s$, then $\mathcal{H}_6^{t}(F)=0$.
Therefore, $\seis(F)=\inf\{s:\mathcal{H}_6^{s}(F)=0\}=\sup\{s:\mathcal{H}_6^{s}(F)=\infty\}$.
\end{proof}

\begin{center}
\begin{figure*}[here]
\centering
\begin{tabular}{c}
\includegraphics[width=120mm, height=60mm]{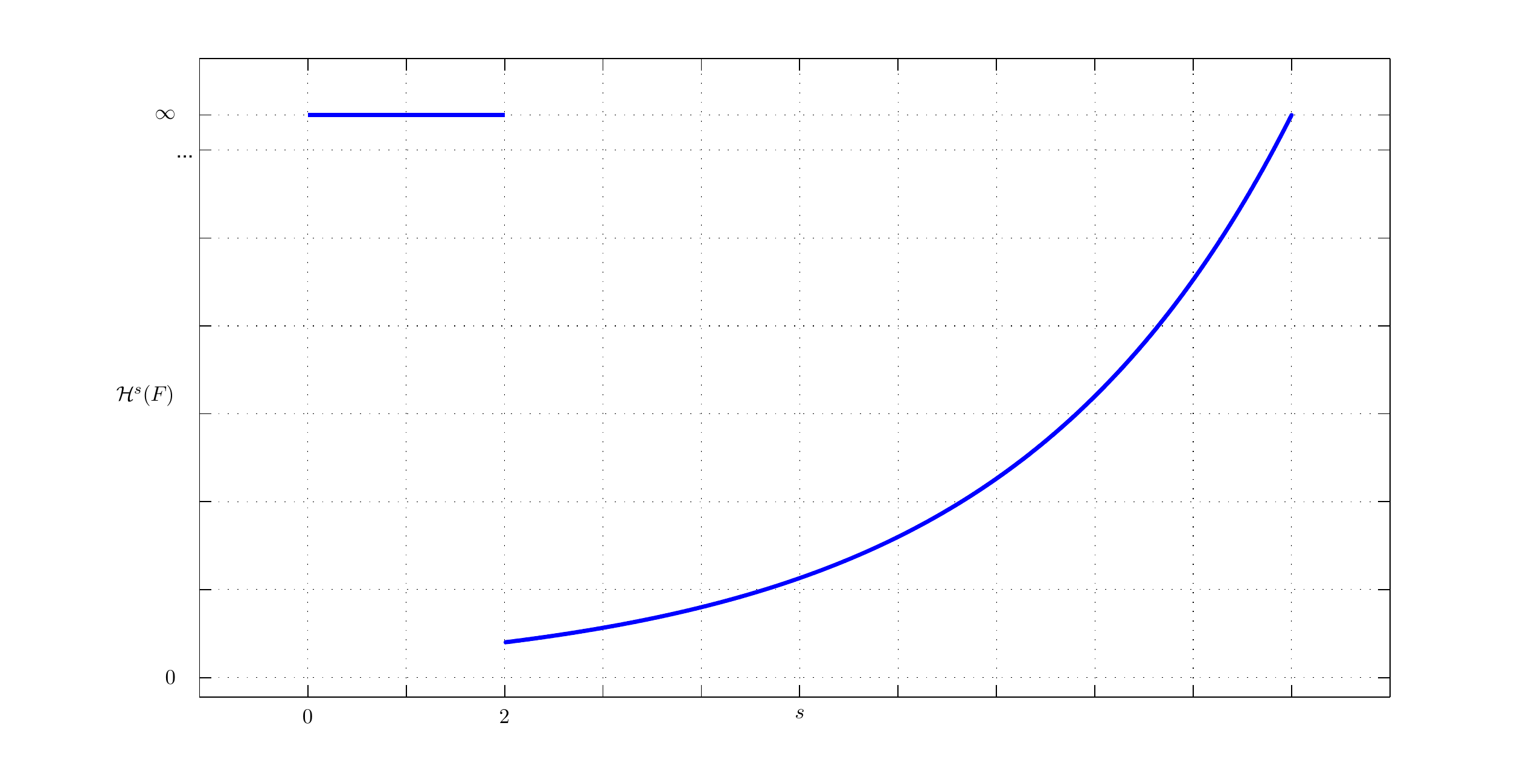} \\ \includegraphics[width=120mm, height=60mm]{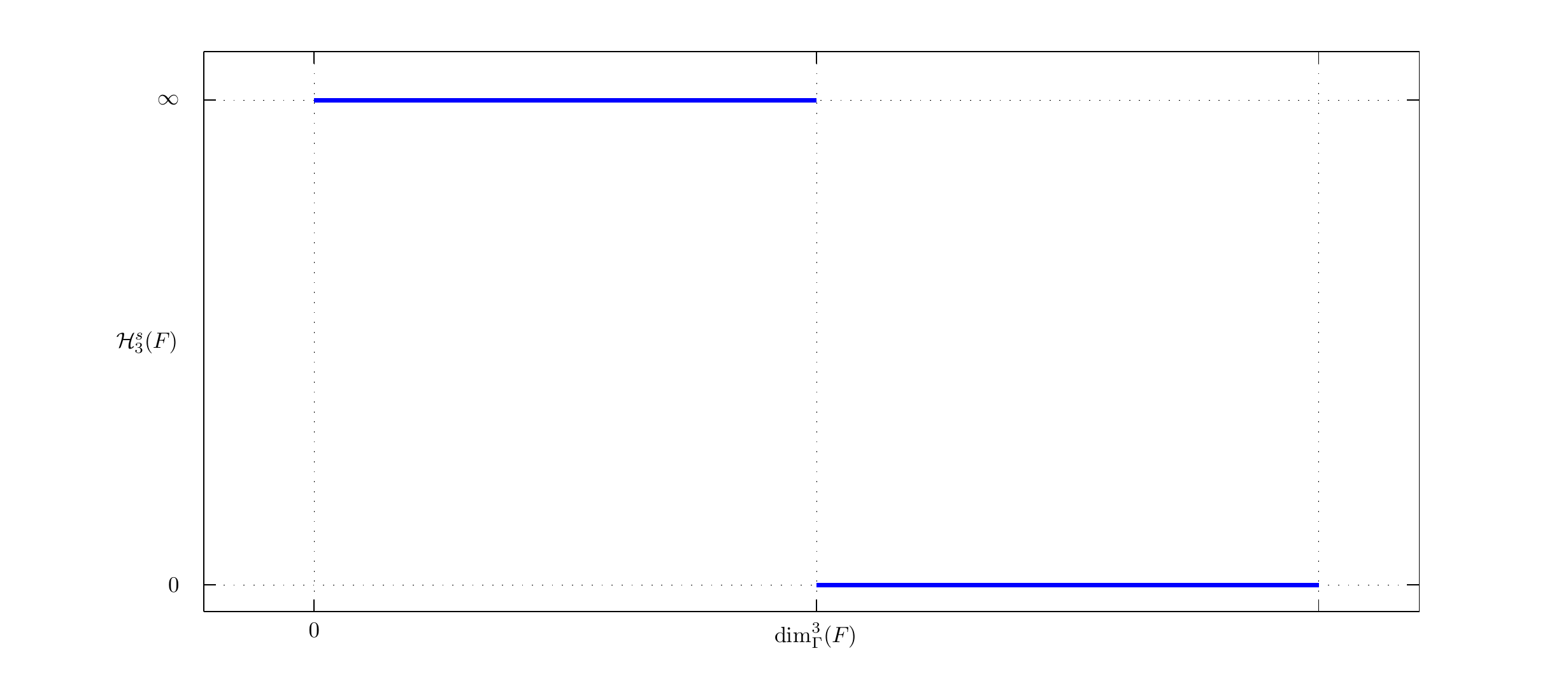}
\end{tabular}
\caption{Graphical representation of $s$ vs. $\mathcal{H}_3^{s}(F)$, where $F=[0,1]\times [0,1]$, and $\ef$ is its natural fractal structure (note that $\delta(F,\Gamma_{n})\nrightarrow 0$) (above), and plot of $s$ vs. $\mathcal{H}_{k}^{s}(F):k=3,4,5$, under the assumption $\delta(F,\Gamma_n)\to 0$, which, however, is not required for fractal dimension VI (see Lemma \ref{lema:6}).}\label{fig:1}
\end{figure*}
\end{center}

\section{Counterexamples involving fractal dimensions for IFS-attractors}\label{sec:ifs}

The main purpose in this section is to emphasize that some hypothesis required in several Moran type theorems are necessary. These theoretical results, that we will recall next, have been proved for fractal dimensions developed in the context of fractal structures (recall Subsection \ref{sub:models}). In this way, it is also worth mentioning that some of these models allow to generalize classical fractal dimensions to fractal structures.


To deal with, firstly, recall that Moran's Theorem (see Subsection \ref{sub:moran}) allows to easily calculate both the Hausdorff and the box dimensions for strict self-similar sets coming from Euclidean IFS satisfying the OSC. In fact, such a classical result throws both Hausdorff and box dimensions as the solution of an equation which only involves the similarity factors that give rise to the corresponding IFS-attractor.

Additionally, it is worth mentioning that the result provided below allowed its authors to reach the equality between both box dimension and fractal dimension II for strict self-similar sets whose IFS are under the OSC. Moreover, the calculation of such quantity is immediate from the number of similarities in the IFS and their common similarity factor.

\begin{teo}(\cite[Theorem 4.19]{DIM1})\label{teo:8}
Let $\F=\{f_i:i\in I\}$ be an Euclidean IFS satisfying the OSC, whose associated IFS-attractor is $K$. Moreover, let us assume that all the similarities in $\F$ have a common similarity factor, $c\in (0,1)$.
Thus, if $\ef$ is the natural fractal structure on $K$ as a self-similar set, then
\begin{equation}\label{eq:13}
\bc(K)=\dos(K)=\frac{-\log k}{\log c}.
\end{equation}
\end{teo}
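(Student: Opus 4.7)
The plan is to establish the two claimed equalities separately, since $\bc(K) = -\log k/\log c$ is essentially a classical statement while $\dos(K) = -\log k/\log c$ requires a direct computation using the natural fractal structure on $K$ as a self-similar set.

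For the box dimension, I would simply invoke Moran's Theorem (recalled in Subsection \ref{sub:moran}), which is applicable because $\F$ is an Euclidean IFS satisfying the OSC. Writing Moran's equation with the common similarity factor $c_i = c$ gives
\begin{equation*}
\sum_{i=1}^{k} c^{s} = k\, c^{s} = 1,
\end{equation*}
whose unique solution is $s = -\log k/\log c$. This yields $\bc(K) = -\log k/\log c$.

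For the fractal dimension II, I would unpack Definition \ref{def:typebox}(\ref{def:dim2}) applied to the natural fractal structure on $K$ as a self-similar set (Definition \ref{def:fsifs}), where level $n$ is $\Gamma_n = \{f_\omega(K) : \omega \in I^n\}$. Three simple observations suffice: (i) every element of $\Gamma_n$ has the form $f_\omega(K) \subseteq K$ by self-similarity, so each element intersects $K$ and hence $N_n(K) = |\Gamma_n| = k^n$ (counted with multiplicity, as the paper's convention permits); (ii) since $f_\omega = f_{\omega_1} \circ \cdots \circ f_{\omega_n}$ is a composition of $n$ similarities each with factor $c$, it is itself a similarity with factor $c^n$, so $\dm(f_\omega(K)) = c^n \dm(K)$; (iii) consequently $\delta(K,\Gamma_n) = \sup\{\dm(A) : A \in \mathcal{A}_n(K)\} = c^n\, \dm(K)$. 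Substituting into the defining limit,
\begin{equation*}
\dos(K) = \lim_{n\to\infty} \frac{\log k^n}{-\log(c^n\, \dm(K))} = \lim_{n\to\infty} \frac{n \log k}{-n \log c - \log \dm(K)} = \frac{-\log k}{\log c},
\end{equation*}
and both the lower and upper limits coincide, so the fractal dimension II exists and equals the claimed value.

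There is no real analytical obstacle here: once one accepts the geometry of the natural fractal structure on a self-similar set with uniform contraction ratio, both quantities ($N_n$ and $\delta(K,\Gamma_n)$) have closed-form expressions in $n$. The only subtlety worth noting is that the OSC is used solely to justify the box dimension step via Moran's Theorem; the fractal dimension II computation itself does not require OSC, since $N_n(K) = k^n$ follows from $f_\omega(K) \subseteq K$ and the convention that duplicates in $\Gamma_n$ are counted with multiplicity.
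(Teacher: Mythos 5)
Your proposal is correct; the paper itself states Theorem \ref{teo:8} only as a citation of \cite[Theorem 4.19]{DIM1} without reproducing a proof, and your argument is exactly the standard one implicit there and used elsewhere in the paper (e.g., in Counterexamples \ref{ct:10} and \ref{ct:5}): Moran's Theorem for $\bc(K)$, and the explicit counts $N_n(K)=k^n$, $\delta(K,\Gamma_n)=c^n\dm(K)$ for $\dos(K)$. Your closing remark correctly isolates the role of the OSC as being needed only for the box-dimension half.
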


Next, we point out that the hypothesis regarding the equality among the similarity factors in Theorem \ref{teo:8} becomes necessary. 

\begin{ct}\label{ct:10}
There exists an Euclidean IFS $\F$ satisfying the OSC, having different contraction factors, and whose associated IFS-attractor $K$, which is equipped with its natural fractal structure as a self-similar set, satisfies that $\bc(K)<\dos(K)$.
\end{ct}

\begin{proof}
Let $\F$ be an Euclidean IFS, whose similarities $f_{1},f_{2}:\mathbb{R}\lto \mathbb{R}$ are defined by
$$f_{i}(x)=\left\{
\begin{array}{ll}
\hbox{$\frac{x}{2}$} & \hbox{if $i=1$;} \\
\hbox{$\frac{x+3}{4}$} & \hbox{if $i=2$.}
\end{array}
\right.$$
Observe that their associated contraction factors are $c_{1}=1/2$, and $c_{2}=1/4$, respectively. Moreover, it holds that $K$ is a strict self-similar set. Further, we can also justify that the IFS $\F$ satisfies the OSC. In fact, just take $V=(0,1)$, as an appropriate open subset. Hence, the Moran's Theorem allows to affirm that the box dimension of $K$ is the solution of the equation $\frac{1}{2^{s}}+\frac{1}{4^{s}}=1$. Thus, $\bc(K)=\log(\frac{1+\sqrt{5}}{2})/\log 2$. On the other hand, there are $2^{n}$ subintervals of $[0,1]$, in each level $n$ of the fractal structure $\ef$, where the largest of them has a diameter equal to $1/2^n$. Accordingly, $\dos(K)=1>\bc(K)$.  
\end{proof}

In the next theorem, we explored which properties regarding the natural fractal structure on any Euclidean space could allow to generalize \cite[Theorem 4.19]{DIM1}. In this way, it is worth mentioning that, fixed a scale $\delta>0$, then it is satisfied that any Euclidean subset $F$ of $\R^d$, with $\dm(F)\leq \delta$, intersects at most $3^{d}$ $\delta$-cubes. Given this, a similar property in the more general context of fractal structures allowed the authors to find out an additional link between fractal dimension II and box dimension.

\begin{teo}(\cite[Theorem 4.13]{DIM1})\label{teo:7}
Let $F$ be a subset of a metric space $(X,\rho)$, and let $\ef$ be a fractal structure on $X$.
Moreover, assume that there exists $k\in \N$, such that, for all $n\in \N$, every subset $A$ of $X$, with $\dm(A)\leq \delta(F,\Gamma_{n})$, intersects at most $k$ elements in level $n$ of\, $\ef$. Additionally, let us suppose also that $\delta(F,\Gamma_{n})\to 0$. Then,
\begin{enumerate}[(1)]
\item $\underline{\dim}_{B}(F)\leq \underline{\dim}_{\ef}^{2}(F)\leq \overline{\dim}_{\ef}^{2}(F)\leq \overline{\dim}_{B}(F).$
Further, if there exists $\bc(F)$, then $\bc(F)=\dos(F)$.
\item If there exists a constant $c\in (0,1)$, such that
$c\, \delta(F,\Gamma_n)\leq \delta(F,\Gamma_{n+1})$,
then $\overline{\dim}_{B}(F)=\overline{\dim}_{\ef}^{2}(F)$, and $\underline{\dim}_{B}(F)=\underline{\dim}_{\ef}^{2}(F)$.
\end{enumerate}
\end{teo}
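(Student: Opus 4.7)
Write $\delta_n := \delta(F,\Gamma_n)$ and let $N_\delta(F)$ denote the standard box-counting function (say, the minimal number of sets of diameter at most $\delta$ needed to cover $F$). My first step is to establish the two-sided comparison
\[
N_{\delta_n}(F) \leq N_n(F) \leq k\, N_{\delta_n}(F).
\]
The left inequality is automatic: $\anf$ itself is a cover of $F$ by sets of diameter at most $\delta_n$. For the right inequality I take an optimal $\delta_n$-cover of $F$; by the multiplicity hypothesis each of its $N_{\delta_n}(F)$ sets meets at most $k$ elements of $\Gamma_n$, and every $A \in \anf$ must meet one of them (it contains some $x \in F$, which lies in some cover set, so $A$ meets that set), giving $N_n(F) \leq k\, N_{\delta_n}(F)$.

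For part (1), the refinement $\Gamma_{n+1}\prec\prec\Gamma_n$ forces $\{\delta_n\}$ to be nonincreasing (a level-$(n{+}1)$ element meeting $F$ sits inside some level-$n$ element that also meets $F$, hence has diameter at most $\delta_n$). Together with $\delta_n\to 0$, dividing the two-sided bound by $-\log\delta_n$ (positive for large $n$) and absorbing $\log k/(-\log\delta_n)$ as $o(1)$ yields
\[
\frac{\log N_n(F)}{-\log\delta_n} = \frac{\log N_{\delta_n}(F)}{-\log\delta_n} + o(1).
\]
The right-hand side is the box-counting ratio evaluated along the subsequence $\delta = \delta_n \to 0$, so its $\liminf$ is at least $\underline{\dim}_B(F)$ and its $\limsup$ is at most $\overline{\dim}_B(F)$; passing to $\liminf$ and $\limsup$ on the left therefore gives
\[
\underline{\dim}_B(F) \leq \underline{\dim}_\ef^2(F) \leq \overline{\dim}_\ef^2(F) \leq \overline{\dim}_B(F),
\]
and if $\bc(F)$ exists all four values coincide, so $\dos(F) = \bc(F)$.

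For part (2), the remaining inequalities $\overline{\dim}_B(F) \leq \overline{\dim}_\ef^2(F)$ and $\underline{\dim}_\ef^2(F) \leq \underline{\dim}_B(F)$ are where the ``almost geometric'' spacing $c\,\delta_n \leq \delta_{n+1}$ does its work. For every sufficiently small $\delta > 0$, a unique $n = n(\delta)$ satisfies $\delta_{n+1} \leq \delta < \delta_n$, and the hypothesis forces $\delta_n \leq \delta/c$, so $-\log\delta$, $-\log\delta_n$, $-\log\delta_{n+1}$ pairwise differ by at most $|\log c|$. Monotonicity of $N_\cdot(F)$ combined with the bound of paragraph~1 then sandwiches
\[
\frac{N_n(F)}{k} \leq N_{\delta_n}(F) \leq N_\delta(F) \leq N_{n+1}(F).
\]
Taking logs, dividing by $-\log\delta$, and absorbing the $|\log c|$- and $\log k$-terms as $o(1)$ yields
\[
\frac{\log N_n(F)}{-\log\delta_n}\bigl(1+o(1)\bigr) - o(1) \leq \frac{\log N_\delta(F)}{-\log\delta} \leq \frac{\log N_{n+1}(F)}{-\log\delta_{n+1}}\bigl(1+o(1)\bigr)
\]
as $\delta \to 0$. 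Since $n(\delta)\to\infty$, the $\limsup$ over $\delta$ of the right-hand side is at most $\overline{\dim}_\ef^2(F)$, and the $\liminf$ over $\delta$ of the left-hand side is at least $\underline{\dim}_\ef^2(F)$ (using that a subset's $\limsup$ does not exceed the global $\limsup$ and dually for $\liminf$), producing the two announced reverse inequalities. The main obstacle is precisely this transfer step: without the geometric-spacing condition, $\{\delta_n\}$ can leave arbitrarily large multiplicative gaps in which $N_\delta(F)$ is free to spike, as hinted at by Counterexample \ref{ct:6}, so the $(1+o(1))$ correction would no longer tend to $1$.
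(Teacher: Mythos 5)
Your proof is correct; the paper itself only quotes this theorem from \cite[Theorem 4.13]{DIM1} without reproducing a proof, and your argument --- the two-sided comparison $N_{\delta_n}(F)\leq N_n(F)\leq k\,N_{\delta_n}(F)$ from the multiplicity hypothesis, followed in part (2) by interpolating an arbitrary scale $\delta$ between consecutive $\delta_{n+1}\leq\delta<\delta_n$ and using $c\,\delta_n\leq\delta_{n+1}$ to control the logarithmic denominators --- is precisely the standard route for such results. The only nit is that the extreme pair $-\log\delta_n$ and $-\log\delta_{n+1}$ can differ by up to $2|\log c|$ rather than $|\log c|$, which changes nothing since any bounded additive error is absorbed into the $o(1)$ terms.
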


In this way, the following counterexample establishes the need for the main hypothesis required in Theorem \ref{teo:7} in order to reach the equality between both the box dimension and the fractal dimension II.
Accordingly, while Counterexample \ref{ct:10} allows to guarantee that all the similarity factors must be the same, the counterexample below shows that all the contractions in $\F$ must be similarities, as well.

\begin{ct}\label{ct:9}
There exists an Euclidean IFS $\F$, whose associated IFS-attractor $K$, equipped with its natural fractal structure $\ef$ as a self-similar set, satisfies that $$\bc(K)\neq \dos(K).$$
\end{ct}

\begin{proof}
Let $I=\{1,\ldots,8\}$ be a finite index set, and $(\R^{2},\F)$ be an Euclidean IFS, whose associated IFS-attractor is $K=[0,1]\times [0,1]$. In addition to that, let us define the contractions $f_{i}:\R^{2}\lto \R^{2}$, as follows:
$$f_{i}(x,y)=\left\{
\begin{array}{ll}
\hbox{$(\frac{x}{2},\frac{y}{4})+(0,\frac{i-1}{4})$} & \hbox{if $i=1,2,3,4$;} \\
\hbox{$(\frac{x}{2},\frac{y}{4})+(\frac{1}{2},\frac{i-5}{4})$} & \hbox{if $i=5,6,7,8$.}
\end{array}
\right.$$
Moreover, let $\ef$ be the natural fractal structure on $K$ as a self-similar set. First of all, notice that the self-maps $f_{i}$ are not similarities but affinities, and that all the mappings have the same contraction factor, namely, $c_{i}=1/2$, for all $i\in I$. Further, it also becomes immediate that $\bc(K)=2$. On the other hand, note that there are $8^{n}$ rectangles in level $n$ of $\ef$, whose dimensions are $\frac{1}{2^{n}}\times \frac{1}{2^{2n}}$. Thus, it is satisfied that 
$$\dm(A)=\delta(K,\Gamma_{n})=\sqrt{\frac{1+2^{2n}}{2^{4n}}},$$ 
for all $A\in \Gamma_{n}$. Hence,
$$\dos(K)=\lim_{n\to \infty}\frac{\log N_{n}(K)}{-\log \delta(K,\Gamma_{n})}=
\lim_{n\to \infty}\frac{3n\log 2}{-\frac{1}{2}\log \frac{1+2^{2n}}{2^{4n}}}=\lim_{n\to \infty}\frac{3n\log 2}{n\log 2}=3.$$
In addition to that, let us find out the ratio between $\delta(K,\Gamma_{n})$ and each side of any $\frac{1}{2^{n}}\times \frac{1}{2^{2n}}$-rectangle. In fact, it holds that $\frac{\sqrt{\frac{1+2^{2n}}{2^{4n}}}}{\frac{1}{2^{2n}}}=\sqrt{1+2^{2n}}>2^{n}$, and that $\frac{\sqrt{\frac{1+2^{2n}}{2^{4n}}}}{\frac{1}{2^{n}}}=\sqrt{1+\frac{1}{2^{2n}}}\geq \frac{1}{2^{n}}$, for all $n\in \N$. Therefore, each subset $A$ of $K$, whose diameter is at most equal to $\sqrt{\frac{2^{2n}+1}{2^{4n}}}$, intersects at most $3\, 2^{n+1}$ elements $A\in \Gamma_{n}$. Consequently, given that such a quantity depends on each $n\in \N$, then we conclude that the key hypothesis in Theorem \ref{teo:7} is not satisfied.
\end{proof}

In \cite{DIM3}, the authors contributed an interesting theorem in the line of the Moran's classical one, which allowed them to calculate the so-called fractal dimension III as the solution of an easy equation which only involves the similarity factors associated with each similarity in the corresponding IFS. The main advantage of such a result lies in the fact that the OSC is not required to be satisfied by that IFS, in order to apply that theoretical result. However, if the OSC is also satisfied, then such a fractal dimension equals both box and Hausdorff dimensions. It is worth mentioning that these results make use of the natural fractal structure that each self-similar set can be equipped with (recall Definition \ref{def:fsifs}). Next, we recall that result.

\begin{teo}(\cite[Theorem 4.20]{DIM3})\label{teo:dim3}
Let $\F$ be an IFS on a complete metric space, whose associated IFS-attractor is $K$. In addition to that, let $c_i$ be the similarity factor associated with each similarity $f_i\in \F$, and assume that $\ef$ is the natural fractal structure on $K$ as a self-similar set. Then, it is satisfied that 
$$\sum_{i\in I}c_i^{\tres(K)}=1.$$
Furthermore, it is also satisfied that $\mathcal{H}_3^{\tres(K)}(K)\in (0,\infty)$.
\end{teo}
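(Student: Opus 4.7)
My plan is to identify $\tres(K)$ directly as the unique $s_0\geq 0$ satisfying $\sum_{i\in I} c_i^{s_0}=1$, by exploiting the exact self-similarity encoded in the natural fractal structure $\ef$ on $K$. Existence and uniqueness of such an $s_0$ are immediate: the function $\phi(t)=\sum_{i\in I} c_i^t$ is continuous and strictly decreasing on $[0,\infty)$ (since each $c_i\in(0,1)$), with $\phi(0)=|I|\geq 1$ and $\phi(t)\to 0$ as $t\to\infty$.

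The central step is a direct computation of the $t$-weight of the natural covering at level $l$. Because each $f_i$ is a similarity with factor $c_i$, the composition $f_\omega=f_{\omega_1}\circ\cdots\circ f_{\omega_l}$ is itself a similarity with factor $c_\omega:=\prod_{j=1}^{l} c_{\omega_j}$, so $\dm(f_\omega(K))=c_\omega\, \dm(K)$. As $K$ is compact we have $\dm(K)<\infty$, and we may assume $\dm(K)>0$ (otherwise $K$ is a singleton and the statement is trivial). Since $f_\omega(K)\subseteq K$ for every $\omega$, every element of $\Gamma_l$ meets $K$, and therefore for any $t\geq 0$,
\begin{equation*}
\sum_{A\in\Gamma_l,\, A\cap K\neq \mpty}\dm(A)^t
= \dm(K)^t \sum_{\omega\in I^l} c_\omega^t
= \dm(K)^t \Bigl(\sum_{i\in I} c_i^t\Bigr)^{l},
\end{equation*}
where the last equality follows from the multinomial expansion $\sum_{\omega\in I^l}\prod_j c_{\omega_j}^t=(\sum_i c_i^t)^l$. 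Note also that $\delta(K,\Gamma_n)=(\max_i c_i)^n\dm(K)\to 0$, so Definition \ref{def:dim3} is applicable.

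Since by construction $\mathcal{A}_{n,3}(K)$ consists precisely of these natural coverings for $l\geq n$, one has $\mathcal{H}_{n,3}^{t}(K)=\inf_{l\geq n}\dm(K)^t\bigl(\sum_i c_i^t\bigr)^l$. A trichotomy on $t$ closes the argument: if $t=s_0$ then every summand equals $\dm(K)^{s_0}$, yielding $\mathcal{H}_3^{s_0}(K)=\dm(K)^{s_0}\in(0,\infty)$; if $t>s_0$ then $\sum_i c_i^t<1$, the $l$-sequence decays geometrically and the infimum is $0$ for every $n$, so $\mathcal{H}_3^t(K)=0$; if $t<s_0$ then $\sum_i c_i^t>1$, the infimum is attained at $l=n$ and grows without bound as $n\to\infty$, so $\mathcal{H}_3^t(K)=\infty$. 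By Definition \ref{def:dim3} this forces $\tres(K)=s_0$, and simultaneously yields $\mathcal{H}_3^{\tres(K)}(K)=\dm(K)^{s_0}\in(0,\infty)$.

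I do not expect any substantial obstacle: the natural fractal structure on a self-similar set reflects the similarity ratios so faithfully that the computation reduces to exact equalities at every level, bypassing any need for the OSC. The one indispensable ingredient is that each $f_i$ be a genuine similarity rather than just a contraction, as this is what underpins the identity $\dm(f_\omega(K))=c_\omega\,\dm(K)$; dropping this hypothesis would collapse the central display to an inequality and the value of $\tres(K)$ would no longer be recoverable from the $c_i$ alone, in line with the phenomenon illustrated by Counterexample \ref{ct:9}.
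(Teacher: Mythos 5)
Your proof is correct and follows essentially the same route as the cited source \cite[Theorem 4.20]{DIM3}: the observation that for $F=K$ the family $\mathcal{A}_{n,3}(K)$ reduces to the level coverings $\Gamma_l$ themselves, the multinomial identity $\sum_{\omega\in I^l}c_\omega^t=\bigl(\sum_{i\in I} c_i^t\bigr)^l$ made exact by strict self-similarity, and the trichotomy on $t$ around the Moran exponent $s_0$. The only quibble is your dismissal of $\dm(K)=0$ as ``trivial'': for a singleton attractor with $k\geq 2$ one gets $\tres(K)=0$ while $\sum_{i\in I}c_i^0=k\neq 1$, so that degenerate case must be excluded from the statement rather than absorbed by the proof.
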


We would like to point out, through the following counterexample, that the hypothesis regarding the strict self-similarity of the IFS-attractor in Theorem \ref{teo:dim3}, still becomes necessary.

\begin{ct}(\cite[Remark 4.21]{DIM3})\label{ct:12}
There exists an Euclidean IFS $\F$, whose associated (non-strict) IFS-attractor $K$, which is equipped with its natural fractal structure as a self-similar set, satisfies that $s\neq \tres(K)$, where $s$ is the solution of the equation $\sum_{i\in I}c_i^s=1$, and $c_i$ is the contraction factor associated with each contraction $f_i\in \F$.
\end{ct}

\begin{proof}
Let $I=\{1,\ldots,8\}$ be a finite index set, $\F=\{f_i:i\in I\}$, and $(\mathbb{R}^2,\F)$ be an Euclidean IFS, whose associated IFS-attractor is $K=[0,1]\times [0,1]$. Moreover, let $f_i:\mathbb{R}^2\lto \mathbb{R}^2$ be the contractions given by
\begin{equation*}
f_{i}(x,y)=\left\{
\begin{array}{ll}
\hbox{$(\frac{-y}{2},\frac{x}{4})+(\frac{1}{2},\frac{i-1}{4})$} & \hbox{if $i=1,2,3,4$;} \\
\hbox{$(\frac{-y}{2},\frac{x}{4})+(1,\frac{i-5}{4})$} & \hbox{if $i=5,6,7,8$.}
\end{array}
\right.
\end{equation*}
First of all, observe that the self-similar set $K$ is not strict. Further, it holds that the contractions $f_i$ are compositions of affine maps: rotations, dilations (in the plane and with respect to one coordinate), and translations. Moreover, it is also clear that all the contractions $f_i$ have the same contraction factor, equal to $1/2$. Thus, $s=3$ is the solution of $\sum_{i\in I}c_i^s=1$.\newline
On the other hand, we affirm that $\tres(K)=2$. To show that, let us calculate the fractal dimension III of $K$ through \cite[Theorem 4.17]{DIM3}. In fact, let us start by analyzing the even levels in $\ef$. Thus, for all $n\in \mathbb{N}$, it holds that every level $\Gamma_{2n}$ consists of squares having sides equal to $1/8^n$. Moreover, it is also clear that $\dm(A)=\delta(K,\Gamma_{2n})=\sqrt{2}/8^n$, for all $A\in \Gamma_{2n}$. This implies that $\delta(K,\Gamma_{2n})\to 0$. Now, let us check the main hypothesis in \cite[Theorem 4.17]{DIM3}. Thus, let us calculate the maximum number of elements in $\Gamma_{2n}$ which are intersected by a subset $B$, whose diameter is at most equal to $\sqrt{2}/8^n$. In fact, the ratio between the diameter of each square in $\Gamma_{2n}$ and its side is $\sqrt{2}<2$, which allows to state that the number of elements in $\mathcal{A}_{2n}(B)$ is at most $3$ in each direction, for all subset $B$ having $\dm(B)<\delta(K,\Gamma_{2n})$. Accordingly, we can choose $k=9$ as a suitable constant for the even order levels in the fractal structure $\ef$.\newline
Similarly, it can be checked that all the odd order levels $\Gamma_{2n+1}$ consist of rectangles whose dimensions are $\frac{1}{2\cdot 8^n}\times \frac{1}{4\cdot 8^n}$. Further, observe that all the elements in each level $\Gamma_{2n+1}$ have the same diameter, equal to $\frac{\sqrt{5}}{4\cdot 8^n}$. Hence, the sequence of diameters $\delta(K,\Gamma_{2n+1})$ also goes to $0$. To check that, the following ratios between the diameter and the sides of each rectangle yields: $\frac{\frac{1}{4}\cdot \frac{\sqrt{5}}{8^n}}{\frac{1}{2}\cdot \frac{1}{8^n}}=\frac{\sqrt{5}}{2}<2$, and $\frac{\frac{1}{4}\cdot \frac{\sqrt{5}}{8^n}}{\frac{1}{4}\cdot \frac{1}{8^n}}=\sqrt{5}<3$, respectively. Therefore, each subset $A$ whose diameter is at most equal to $\delta(K,\Gamma_{2n+1})$, has to intersect, at most, to $k=12$ elements in each odd level $\Gamma_{2n+1}$.\newline
Consequently, the main hypothesis in \cite[Theorem 4.17]{DIM3} is satisfied. 
Thus, $\bc(K)=\tres(K)=2$.
\end{proof}

Recall that the next corollary follows as a consequence of both Theorem \ref{teo:dim3} and classical Moran's one.

\begin{cor}(\cite[Corollary 4.22]{DIM3})\label{cor:dim3}
Let $\F$ be an Euclidean IFS satisfying the OSC, whose associated IFS-attractor is $K$. Thus, if $\ef$ is the natural fractal structure on $K$ as a self-similar set, then the following chain of equalities holds:
$$\bc(K)=\tres(K)=\h(K).$$
\end{cor}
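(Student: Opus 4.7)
The plan is to deduce the corollary by simply combining Moran's Theorem (recalled in Subsection \ref{sub:moran}) with Theorem \ref{teo:dim3}, since both results characterize a certain dimension as the solution of the \emph{same} equation, namely $\sum_{i\in I} c_i^{s}=1$, where the $c_i$ are the common similarity factors of the IFS $\F$.

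First, I would invoke Moran's Theorem. Since $\F$ is an Euclidean IFS under the OSC whose attractor is $K$, that theorem yields $\bc(K)=\h(K)=s$, where $s\in[0,\infty)$ is the unique real number satisfying $\sum_{i\in I}c_i^{s}=1$. Next, I would invoke Theorem \ref{teo:dim3}: since $\F$ is in particular an IFS on a complete metric space (every Euclidean space is), and $\ef$ is the natural fractal structure on $K$ as a self-similar set, we obtain $\sum_{i\in I}c_i^{\tres(K)}=1$ as well. So both $s$ and $\tres(K)$ are non-negative solutions of the same Moran-type equation.

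The last step is to argue uniqueness of that solution. The function $\varphi\colon[0,\infty)\to(0,\infty)$ defined by $\varphi(t)=\sum_{i\in I}c_i^{t}$ is continuous, strictly decreasing (because each $c_i\in(0,1)$, so each summand $c_i^{t}$ is strictly decreasing in $t$), it takes the value $\varphi(0)=\operatorname{Card}(I)\geq 1$, and tends to $0$ as $t\to\infty$. Therefore the equation $\varphi(t)=1$ has a unique solution in $[0,\infty)$, which forces $\tres(K)=s$, and stringing the equalities together gives $\bc(K)=\tres(K)=\h(K)$.

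There is essentially no obstacle here; the corollary is genuinely a one-line consequence once Theorem \ref{teo:dim3} and Moran's Theorem are available, and the only mild subtlety is the strict monotonicity argument used to guarantee that the Moran equation has a unique exponent. I would not expect to need any further hypothesis beyond those already assumed (Euclidean ambient space and OSC, which are needed precisely to invoke Moran's classical result, Theorem \ref{teo:dim3} itself not requiring OSC).
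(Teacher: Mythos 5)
Your proposal is correct and matches the paper exactly: the paper itself only remarks that the corollary ``follows as a consequence of both Theorem \ref{teo:dim3} and classical Moran's one,'' and your argument fills in precisely that combination, with the strict monotonicity of $t\mapsto\sum_{i\in I}c_i^{t}$ supplying the uniqueness needed to identify $\tres(K)$ with the common value $\bc(K)=\h(K)$.
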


However, as the next counterexample states, Corollary \ref{cor:dim3} cannot be improved in the sense that the OSC is still needed in order to achieve such a result.

\begin{ct}(\cite[Remark 4.23]{DIM3})\label{ct:12}
There exists an Euclidean IFS $\F$ (not satisfying the OSC), whose associated IFS-attractor $K$, equipped with its natural fractal structure $\ef$ as a self-similar set, satisfies that $\h(K)\neq \tres(K)$.
\end{ct}

\begin{proof}
Let $I=\{1,2,3\}$ be a finite index set, $\F=\{f_i:i\in I\}$, and $(\mathbb{R},\F)$ be an Euclidean IFS, whose associated IFS-attractor $K=[0,1]$, satisfies the following Hutchinson's equation: $K=\bigcup_{f\in \F}f(K)$. Further, let
$f_i:\mathbb{R}\lto \mathbb{R}$ be the contractions defined as
\begin{equation*}
f_{i}(x)=\left\{
\begin{array}{ll}
\hbox{$\frac{x}{2}$} & \hbox{if $i=1$;} \\
\hbox{$\frac{x+1}{2}$} & \hbox{if $i=2$;} \\
\hbox{$\frac{2x+1}{4}$} & \hbox{if $i=3$.}
\end{array}
\right.
\end{equation*}
Moreover, let also $\ef$ be the natural fractal structure on $K$ as a self-similar set. Firstly, observe that all the contractions $f_i$ are similarities having a common similarity factor, equal to $1/2$. Therefore, $K$ is a strict self-similar set on the real line. Thus, Theorem \ref{teo:dim3} gives that $\tres(K)$ is the solution of $\sum_{i\in I}c_i^s=1$. Thus, $\tres(K)=\log 3/\log 2$. On the other hand, both \cite[Theorem 4.10]{DIM3} and \cite[Theorem 4.15]{DIM1}, lead to $\uno(K)=\dos(K)=\log 3/\log 2=\tres(K)$, since all the elements in level $n$ of $\ef$ have a diameter whose order is equal to $1/2^n$. Finally, we affirm that $\F$ does not satisfy the OSC. In fact, suppose the opposite. Then Corollary \ref{cor:dim3} would imply that $\tres(K)$ must be equal to $\h(K)=1$, which is a contradiction. Hence, the result follows.
\end{proof}

\section{Counterexamples to highlight that some fractal dimensions do not coincide in general}\label{sec:agree}

Fractal structures allow to generalize the classical models of fractal dimension in the more general context of GF-spaces. To deal with, some new models for fractal dimension with respect to a fractal structure have been explored along this paper (recall Subsection \ref{sub:models}). It turns out that each one of them presents some properties and features that make it more appropriate than the rest of them depending on each specific context. Moreover, some results have been explored in previous works in order to achieve equalities between those fractal dimension definitions.
We would like to point out that these kind of theoretical results allow to interplay these definitions to powerful their effect, as Falconer states in the context of classical fractal dimension models (see \cite[Subsection 3.2]{FAL90}).
It is worth mentioning that this kind of theoretical results involve conditions regarding the elements of a fractal structure and restrictions about the construction of IFS-attractors, as well. In this section, though, we provide some counterexamples that emphasize the fact that several fractal dimension models for a fractal structure do not agree, in general.

To start with, we state that fractal dimensions I and II do not always match.

\begin{ct}(\cite[Remark 3.11 \& Remark 4.9]{DIM1})\label{ct:5}
There exist a subset $F$ of a given space $X$, as well as a fractal structure $\ef$ on $X$, such that $\uno(F)\neq \dos(F)$. 
\end{ct}

\begin{proof}
Let $X=\R$, $C$ be the middle third Cantor set, and $\ef$ be the natural fractal structure on $C$ as a self-similar set. Then it holds that
$$\dim_{\ef}^{2}(C)
=\lim_{n\to \infty} \frac{\log 2^{n}}{-\log 3^{-n}}
=\frac{\log 2}{\log 3},$$
since in each level $\Gamma_{n}$ of that fractal structure, there are $2^{n}$ ``subintervals'' whose diameters are equal to $1/3^n$. That argument also remains valid to get $\uno(C)=1$.
\end{proof}

The following counterexample points out that fractal dimension III can be different from both fractal dimensions I \& II.

\begin{ct}\label{ct:11}
There exists an Euclidean IFS $\F$, whose associated IFS-attractor $K$, equipped with its natural fractal structure $\ef$ as a self-similar set, satisfies that $$\uno(K)=\dos(K)\neq \tres(K).$$
\end{ct}

\begin{proof}
In fact, let $I=\{1,2,3\}$ be a finite index set, $\F=\{f_i:i\in I\}$, and $(\mathbb{R},\F)$ be an Euclidean IFS, whose associated IFS-attractor $K=[0,1]$, satisfies the following Hutchinson's equation: $K=\bigcup_{f\in \F}f(K)$. Moreover, let 
$f_i:\mathbb{R}\lto\mathbb{R}$ be the contractions given by
\begin{equation*}
f_{i}(x)=\left\{
\begin{array}{ll}
\hbox{$\frac{x}{2}$} & \hbox{if $i=1$;} \\
\hbox{$\frac{x+2}{4}$} & \hbox{if $i=2$;} \\
\hbox{$\frac{x+3}{4}$} & \hbox{if $i=3$.}
\end{array}
\right.
\end{equation*}
On the other hand, let $\ef$ be the natural fractal structure on $K$ as a self-similar set. It becomes also clear that the maps $f_i$ are similarities, so the IFS-attractor $K$ is a strict self-similar set. 
We also affirm that $\F$ satisfies the OSC. In fact, just take $V=(0,1)$ as an appropriate open set. Thus, notice that
\cite[Corollary 4.22]{DIM3} leads to $\tres(K)=\bc(K)=\h(K)=1$. Furthermore, note that in each level $n$ of the fractal structure $\ef$, there are $3^n$ subintervals contained in $[0,1]$. Hence, since $\delta(K,\Gamma_n)=1/2^n$, then $\dos(K)=\log 3/\log 2=\uno(K)$. In fact, that result becomes straightforward by applying both Definition \ref{def:typebox} (\ref{def:dim2}) and \cite[Theorem 4.15]{DIM1}, respectively.
\end{proof}

It is worth mentioning that the unique choice in order to study the fractal dimension of a curve through the classical models for fractal dimension is to calculate the fractal dimension for the graph of the curve. In addition to that, fractal dimension III (introduced in Definition \ref{def:dim3} and calculated as in \cite[Theorem 4.7]{DIM3}) still could be applied for that purpose. In fact, this fractal dimension model for a fractal structure considers the structure of the curve as well as the complexity of the procedure applied in order to generate it. To deal with, first we define an appropriate \emph{induced} fractal structure on (the parametrization of) any curve from a fractal structure on the closed unit interval.

\begin{defn}(\cite[Definition 3.1]{DIMC})\label{def:fsind}
Let $\alpha:[0,1]\longrightarrow X$ be a parametrization of a curve, $X$ be a metric space, and $\ef$ be a fractal structure on $[0,1]$. Thus, the fractal structure induced by $\ef$ on the image set\, $\alpha([0,1])\subseteq X$, is defined as the countable family of coverings $\gf=\{\Delta_n\}_{n\in \mathbb{N}}$, where its levels are given by $\Delta_n=\alpha(\Gamma_n)=\{\alpha(A):A\in \Gamma_n\}$. 
\end{defn}

The fractal dimension for the parametrization of a given curve can be defined from both the induced fractal structure (as given in Definition \ref{def:fsind}) and the fractal dimension III model, as follows:

\begin{defn}(\cite[Definition 3.3]{DIMC})\label{def:dcur}
Let $\rho$ be a distance on $X$, and $\alpha:[0,1]\lto X$ be a parametrization of a curve. Let also $\ef$ be a fractal structure on $[0,1]$, and $\gf$ be the fractal structure induced by $\ef$ on the image set $\alpha([0,1])\subseteq X$. Then the fractal dimension of the (parametrization of the) curve $\alpha$ is defined as $\dim_{\ef}(\alpha)=\tresb(\alpha([0,1]))$. Further, if no additional information regarding the starting fractal structure $\ef$ is provided, then we will assume that $\ef$ is the natural fractal structure on $[0,1]$. In that case, the fractal dimension of the curve will be denoted, merely, by $\dc=\tresb(\alpha([0,1]))$.
\end{defn}

The next counterexample we provide presents a curve which, as the classical Hilbert's curve does, also fills the whole unit square,
though its fractal dimension does not agree with its box dimension nor its
Hausdorff dimension. Thus, it shows that Definition \ref{def:dcur} of fractal
dimension results more accurate than the classical
models of fractal dimension, since it also takes into account the procedure followed to construct it.


\begin{ct}\label{ct:13}
A modified Hilbert's curve $\beta$, which crosses twice
some elements in each level of its induced fractal structure $\gf$, can be constructed in order to prove that its fractal dimension is different from both
its box and its Hausdorff dimensions, which are calculated with respect to the image set $\beta([0,1])$.
\end{ct}

\begin{proof}
Indeed, let $\ef=\{\Gamma_{n}\}_{n\in \mathbb{N}}$ be a fractal structure on $[0,1]$, whose levels are given by
$$\Gamma_{n}=\Big\{\Big[\frac{k}{5^{n}},\frac{k+1}{5^{n}}\Big]:k\in \{0,1,\ldots,5^{n}-1\}\Big\}.$$
Let us consider also the curve $\beta:[0,1]\lto Y$, that we will define through \cite[Theorem 3.6]{DIMC}, where $Y=[0,1]\times [0,1]$ has been equipped
with the Euclidean distance. Moreover, let also $\gf$ be the fractal structure induced by $\ef$ on $\beta([0,1])$.
Thus, the definition of the modified Hilbert's curve is given by the sequence of maps $\{\beta_n\}_{n\in \mathbb{N}}$, where the description of each map
$\beta_{n}:\Gamma_{n}\lto \Delta_{n}$, is illustrated
in Figure \ref{fig:hilbert2} for its first two levels. Observe that the polygonal line shows the
procedure that must be followed in order to fill the whole unit square in each stage.\newline
Moreover, notice that the elements in the first level $\Delta_1$ of the induced fractal
structure are given as
$\beta([0,\frac{1}{5}])=\beta([\frac{1}{5},\frac{2}{5}])=[0,\frac{1}{2}]^{2}$,
$\beta([\frac{2}{5},\frac{3}{5}])=[\frac{1}{2},1]\times [0,\frac{1}{2}]$,
$\beta([\frac{3}{5},\frac{4}{5}])=[\frac{1}{2},1]^{2}$, and
$\beta([\frac{4}{5},1])=[0,\frac{1}{2}]\times [\frac{1}{2},1]$, for instance. Notice that the subsequent levels
can be obtained in a similar way.
In addition to that, it holds that $\dim_{B}(\beta([0,1]))=\dim_{H}(\beta([0,1]))=2$, since the modified
Hilbert's curve fills the whole unit square.\newline
On the other hand, $\mathcal{H}_{n}^{s}(\beta([0,1]))=(\sqrt{2})^{s}\,
(\frac{5}{2^{s}})^{n}$, since there are $5^{n}$ ``subsquares'' (having a diameter equal to
$\sqrt{2}/2^{n}$, each) which intersect $\beta([0,1])$ in level $n$ of the induced
fractal structure $\gf$. Accordingly,
\begin{equation*}
\mathcal{H}^{s}(\beta([0,1]))=\left\{
\begin{array}{ll}
\hbox{$\infty$} & \hbox{if $s<\frac{\log 5}{\log 2}$;}\\
\hbox{$\sqrt{5}$} & \hbox{if $s=\frac{\log 5}{\log 2}$;} \\
\hbox{$0$} & \hbox{if $s>\frac{\log 5}{\log 2},$} 
\end{array}
\right.
\end{equation*}
which leads to $\dim(\beta)=\log 5/ \log 2$. 
\begin{center}
\begin{figure}
\centering
\begin{tabular}{cc}
\includegraphics[width=60mm, height=53mm]{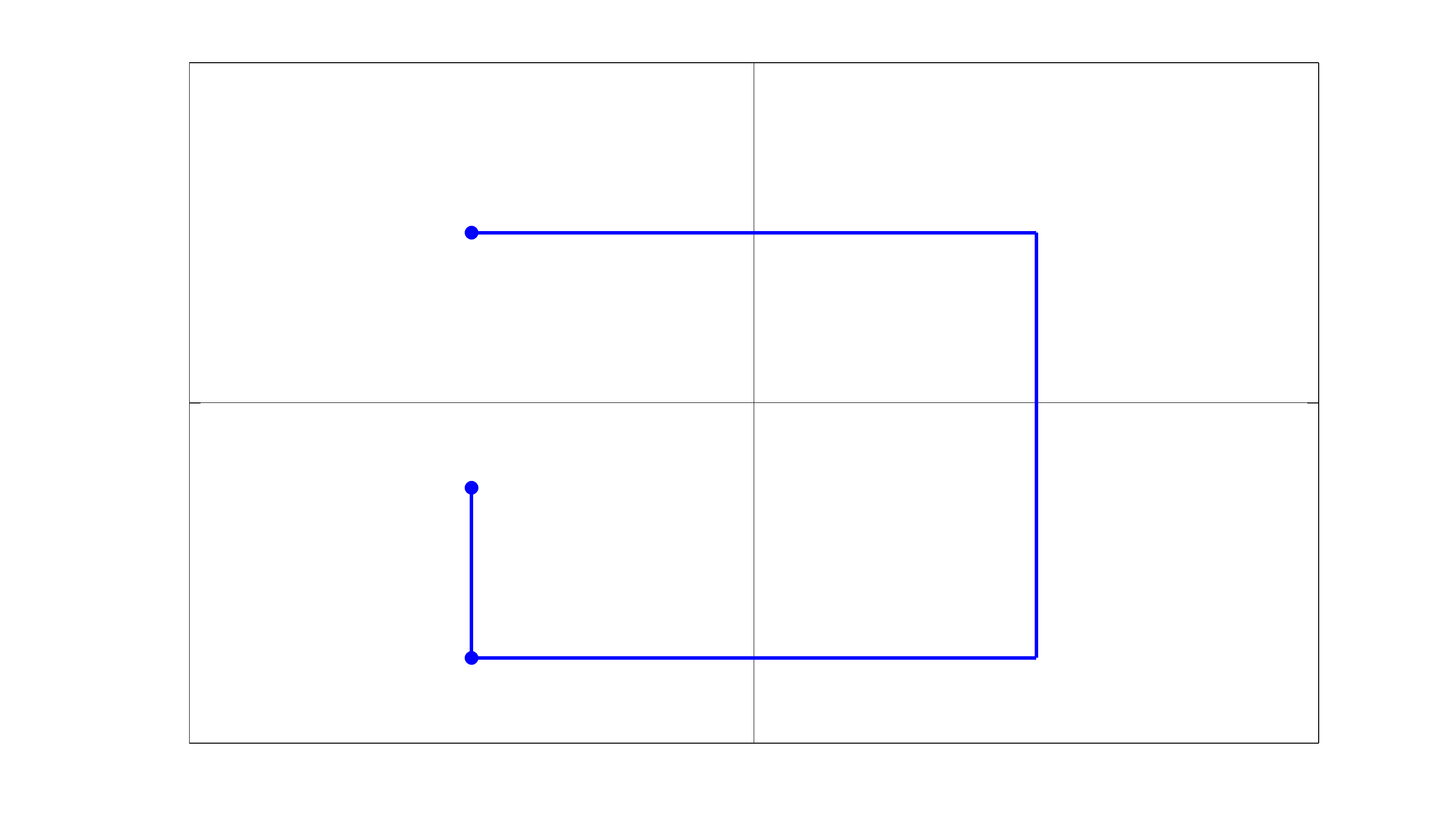} &
\includegraphics[width=60mm, height=53mm]{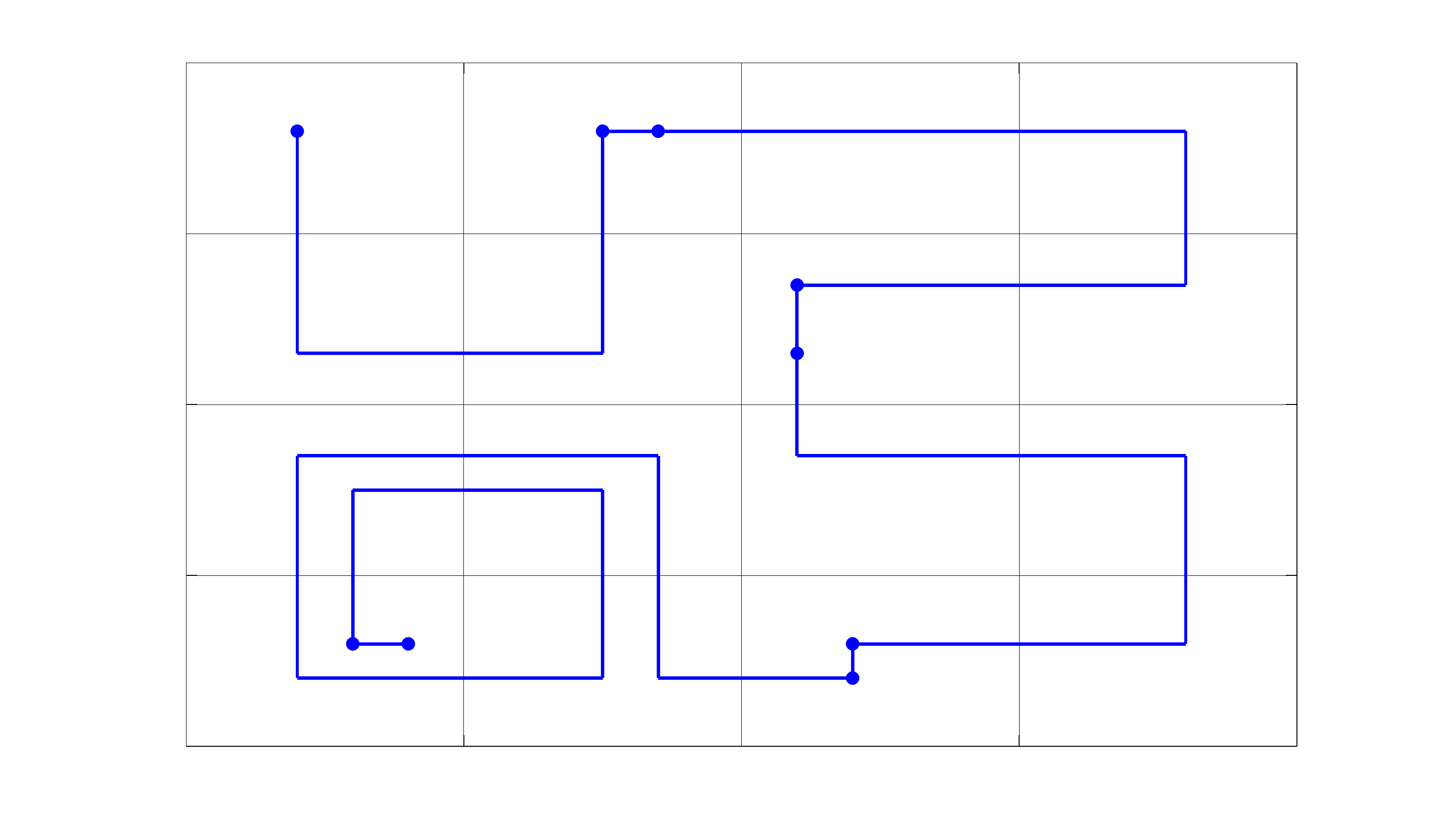}
\end{tabular}
\caption{First two levels in the construction of the modified Hilbert's curve.}\label{fig:hilbert2}
\end{figure}
\end{center}
\end{proof}

\begin{ct}
There exist a certain subspace $X$, as well as a fractal structure $\ef$ on $X$, such that $\tres(X) \not=\cinco(X)$, and $\tres(X) \not=\cuatro(X)$.
\end{ct}

\begin{proof}
Let $X=\beta([0,1])$, and $\ef$ be as $\gf$ in Counterexample \ref{ct:13}. Firstly, recall that Counterexample \ref{ct:13} gives that $\tres(X)=\log 5/ \log 2$. On the other hand,
note that the fractal dimension V of $X$ with respect to the natural fractal structure is equal to $2$, and hence, $\cinco(X)=2$, due to the relationship between $\ef$ and the natural fractal structure.
Similarly, $\cuatro(X)=2$.
\end{proof}

Interestingly, the fractal dimension IV introduced in Definition \ref{def:456} (\ref{eq:hnk}) provides an intermediate model between the Hausdorff and the box dimensions, as the following counterexample establishes.

\begin{ct}\label{ct:12}
\begin{enumerate}[(1)]
\item There exist a subset $F$ of a metric space $(X,\rho)$, and a fractal structure $\ef$ on $X$, such that $\cuatro(F)<\bc(F)$.
\item There exist a subset $F$ of a metric space $(X,\rho)$, and a fractal structure $\ef$ on $X$, such that $\dih(F)<\cuatro(F)$.
\end{enumerate}
\end{ct}

\begin{proof}
\begin{enumerate}[(1)]
\item Let $\ef$ be the natural fractal structure on $[0,1]$, and $F=\{0,1,\frac{1}{2},\frac{1}{3},\ldots\}$. First, note that \cite[Example 3.5]{FAL90} yields $\bc(F)=1/2$. Thus, since $F$ is a compact Euclidean subset, then \cite[Theorem 3.12]{DIM4} leads to $\cuatro(F)=\dih(F)=0$, since $F$ is countable.
\item Let $\ef$ be the natural fractal structure on $X=[0,1]$, and $F=\mathbb{Q}\cap X$. Thus, note that \cite[Theorem 3.13]{DIM4} yields $\cuatro(F)=\dih(\overline{F})=1$, though $\dih(F)=0$.
\end{enumerate}
\end{proof}

Further, it holds that fractal dimensions IV and V do not always coincide.

\begin{ct}
There exist a subset $F$ of a certain space $X$, as well as a fractal structure $\ef$ on $X$, such that $\cuatro(F) \not=\cinco(F)$.
\end{ct}

\begin{proof}
Let $\ef$ be the natural fractal structure on $X=[0,1]$, and $F=\mathbb{Q}\cap X$. Thus, \cite[Theorem 3.13]{DIM4} leads to $\cuatro(F)=\dih(\overline{F})=1$, whereas \cite[Theorem 3.10]{DIM4} gives that $\cinco(F)=\dih(F)=0$.
\end{proof}

Nevertheless, though both Hausdorff dimension and fractal dimension VI implicitly assume that $\delta(F,\Gamma_n)\to 0$, they do not coincide, in general, as the following example establishes.

\begin{ct}
There exist a certain space $X$, as well as a fractal structure $\ef$ on $X$, such that $\h(X)\neq \seis(X)$.
\end{ct}

\begin{proof}
Let $\ef=\{\Gamma_n\}_{n\in \N}$ a fractal structure on $X=\R^2$, whose levels are given as follows:
\begin{equation*}
\Gamma_n=\Bigg\{\Bigg[\frac{k}{2^n},\frac{k+1}{2^n}\Bigg]\times \{x\}:k\in \Z,x\in \R\Bigg\}.
\end{equation*}
In other words, it is considered the natural fractal structure on each horizontal straight line in $\R^2$. Thus, while the fractal dimension VI for each horizontal subinterval is equal to $1$, the fractal dimension VI for each vertical subinterval is equal to $\infty$, since $\mathcal{A}_{\delta,6}(X)=\emptyset$. This is due to the fact that there are no countable coverings by elements in the fractal structure $\ef$, since its elements are horizontal segments which do intersect the vertical ones just in a single point. It is also worth mentioning that the topology induced by such a fractal structure $\ef$ does not agree with the usual topology.
\end{proof}

Finally, we would like to end the paper through an interesting open question involving fractal dimensions V and VI.

\begin{prob}
Does there exist a fractal structure $\ef$ on a space $X$ and a subset $F \subseteq X$ with $\delta(F,\Gamma_n)\to 0$ and such that $\cinco(F)\neq\seis(F)$?
\end{prob}

Recall that some conditions regarding the fractal structure were provided in \cite[Theorem 3.7]{DIM4} in order to reach the equality $\cinco(F)=\seis(F)$.

\end{document}